\documentclass[11pt]{amsart}


\usepackage{amsfonts,amsmath,amssymb,amsthm}
\usepackage{color}
\usepackage[english]{babel}
\usepackage{cite}

\setcounter{MaxMatrixCols}{10}

\newtheorem{theorem}{Theorem}
\theoremstyle{plain}

\newtheorem{definition}{Definition}

\newtheorem{lemma}{Lemma}
\newtheorem{notation}{Notation}

\newtheorem{proposition}{Proposition}
\newtheorem{remark}{Remark}

\numberwithin{equation}{section}

\begin{document}
\title{Gradient  $\rho$-Einstein Solitons and  Applications}
\author{S\.INEM G\"uler}
\address[S. G\"uler]{Department of Industrial Engineering,
Istanbul Sabahattin Zaim University, Halkal\.i, Istanbul, Turkey.}
\email{sinem.guler@izu.edu.tr}
\author{B\"ulent \"Unal}
\address[B. \"{U}nal]{Department of Mathematics, Bilkent University,
Bilkent, 06800 Ankara, Turkey}
\email{bulentunal@mail.com}
\subjclass[2010]{53C25, and 53C40.}
\keywords{Gradient Schouten solitons, double warped product, generalized Robertson-Walker spacetimes, standard static
spacetimes, Walker manifolds.}

\begin{abstract}
In this paper, we mainly study gradient  $\rho$-Einstein solitons on doubly
warped product manifolds. More explicitly, we obtain necessary and sufficient
conditions for a doubly warped product manifold to be a gradient $\rho$-Einstein soliton.
We also apply our main result to warped product spacetime models such as generalized
Robertson-Walker and standard static spacetimes as well as 3-dimensional Walker
manifolds. We finally establish that there is no 3-dimensional esentially
conformally symmetric gradient  $\rho$-Einstein soliton.
\end{abstract}

\maketitle

\section{Introduction}

The theory of Ricci solitons is a topic that has been studied extensively for the last twenty years.
This theory, in addition to being famous for  solving the Poincar\'e Conjecture proposed by Perelman, is a subject having a
wide range of uses in differential geometry and physical applications. One of the most important reasons why the Ricci flow theory is
important and worth studying is that it has self-similar solutions called as Ricci solitons and is often used to classify the
singularity models.

The motivation for the Ricci soliton and Ricci flow theory has paved the way for the study of different flow equations and their special solutions in the literature.
One of these new flow equations is  first introduced by Bourguignon \cite{bourguignon,Catino2016}, defined by the equation
$$\partial_g g = -2({\rm Ric-\rho g})$$ and named as Ricci-Bourguignon flow. Also, it is known that with appropriate rescaling over time,  when $\rho$ is not positive,  the Ricci-Bourguignon flow is an interpolation between the Ricci flow and the Yamabe flow, (for more, we refer \cite{Hamilton88,ye, Catino2017}).

As a   self-similar solution to the Ricci-Bourguignon flow, the gradient  $\rho$-Einstein soliton is defined as follows:
Let $(M^n,g)$ be an $n-$dimensional semi-Riemannian manifold and
$\rho$ is a real number. Then $(M^n,g)$ is said to be a   gradient $\rho$-Einstein soliton
if there exist a smooth function $\varphi$ on $M$ and a real number $\lambda$
such that
\begin{equation} \label{meqn:gradient-Yamabe}
{\rm Ric}+{\rm Hess}(\varphi)= (\rho \tau + \lambda)g,
\end{equation}
where $\tau$ is the scalar curvature of $M$. Here, the underlying
gradient $\rho$-Einstein soliton is denoted by $(M,g,\varphi,\rho,\lambda).$

Notice that if $\rho=0,$ then (\ref{meqn:gradient-Yamabe}) is reduced to the gradient Ricci soliton
equation. The gradient  $\rho$-Einstein solitons are said to be steady if $\lambda=0,$ shrinking if $\lambda>0,$ and
 expanding if $\lambda<0.$  Moreover, corresponding to special values of the parameter $\rho$, we refer to the $\rho$-Einstein solitons with different
names: The  $\rho$-Einstein soliton is called  an Einstein soliton if $\rho=1/2,$ a traceless soliton if $\rho=1/n,$ and
 a Schouten soliton if $\rho=1/2(n-1).$

Now, we will refer to some major studies in the field of gradient $\rho$-Einstein solitons. In \cite{Catino2015} the Catino, Mazzieri and Mongodi consider  shrinking gradient $\rho$-Einstein solitons and they classify non-compact gradient shrinkers with bounded non-negative sectional curvature. In \cite{Catino2016}, they also prove that every compact gradient Einstein, traceless Ricci, or Schouten soliton is trivial. Furthermore, they establish that every complete steady gradient Schouten soliton is trivial, and every complete 3-dimensional shrinking gradient Schouten soliton is isometric to a finite quotient of either
$\mathbb S^3$, $\mathbb R^3$ or $\mathbb R \times \mathbb S^2.$ By using algebraic curvature estimates and the Yamabe-Sobolev inequality, integral pinching rigidity results for compact gradient shrinking gradient $\rho$-Einstein solitons are proved in \cite{Huang2017}.
In \cite{Lemes2017}, Pina et al. proved that if $B \times _{f}F$ is a semi-Riemannian gradient Ricci soliton with potential
function $h$ then $f$ is constant or $h$ depends only on the base $B$.
In \cite{Mondal}, Mondal, Kumar and Shaikh obtain some sufficient conditions for a Riemannian manifold
admitting an almost $\eta$-Ricci soliton to be compact.
In \cite{Ho2020}, Ho considers Ricci-Bourguignon
flows of 3-dimensional locally homogeneous geometries on a closed 3-dimensional manifold endowed with evolution of Yamabe constants
and then he studies the complete Bach-flat shrinking gradient solitons as well. In \cite{Pina2020}, Pina and Menezes study the gradient $\rho$-Einstein solitons that are conformal
to a pseudo-Euclidean space and invariant under the action of the pseudo-orthogonal
group and then they provide all the gradient Schouten solitons for this type.
In \cite{Shaikh2021}, the authors established that a  gradient $\rho$-Einstein soliton with a
vector field of bounded norm and satisfying some other conditions is isometric to the Euclidean sphere.
In \cite{Blaga2021}, another version of gradient Ricci-Bourguignon soliton is
studied by means of the potential vector field. In \cite{Dwidedi}, the author showed that a compact gradient
Ricci-Bourguignon almost soliton is isometric to a Euclidean sphere if it has constant scalar curvature or
its associated vector field is conformal.

Very recently, in \cite{Shaikh2022}, the authors provide a lower bound of the diameter of a compact gradient  $\rho$-Einstein soliton under some special conditions. They conclude that some conditions of the
gradient  $\rho$-Einstein soliton with bounded Ricci curvature to be non-shrinking and non-expanding. Also, Blaga and Ta\c{s}tan study many types of gradient solitons including Riemann, Ricci, Yamabe and conformal on doubly warped product
manifolds, \cite{Blaga2022}. In \cite{G_U2022}, quasi-Yamabe gradient solitons on warped product manifolds are studied and the authors
prove the existence of the non-trivial gradient Yamabe soliton on generalized Robertson-Walker spacetimes, standard
static spacetimes, Walker manifolds and pp-wave spacetimes. In \cite{Turki2022}, Turki et al.  study gradient $\rho$-Einstein
solitons on warped product manifolds and investigate Einstein solitons on warped product manifolds admitting a conformal
vector fields.

Although the gradient $\rho$-Einstein soliton structure has been studied mainly on the warped product manifolds in the literature, it can be
said that this soliton structure has not yet been studied in depth for double warped product manifolds which is a much more general product metric and, even for Walker manifolds. Therefore, this paper is aimed to fill this gap in the literature and it is organized as follow. In Section 2, after we provide basic curvature related formulas for doubly warped products, we
derive our main results, that is we obtain necessary and sufficient conditions for a doubly warped product manifold to be a gradient $\rho$-Einstein soliton. In Section 3, we consider gradient $\rho$-Einstein solitons on warped product spacetime models and 3-dimensional Walker manifolds. We finally establish that there exists no 3-dimensional esentially conformally symmetric gradient  $\rho$-Einstein soliton.

\section{Doubly Warped Product gradient  $\rho$-Einstein Solitons}

Let $(M_1,g_1)$ and $(M_2,g_2)$ be two semi-Riemannian manifolds of dimensions $m_i$, $i=1,2$. Let  $\pi_1: M_1 \times M_2  \rightarrow M_1$
and  $\pi_2: M_1 \times M_2  \rightarrow M_2$ be the canonical projections.   Then doubly warped product manifold $(_{f_2}M_1 \times_{f_1}
M_2,g)$ of $(M_1,g_1)$ and $(M_2,g_2)$ is    the product manifold $M_1\times M_2$ equipped with the metric
\begin{equation} \label{e0}
g=(f_2 \circ  \pi_2)^2\pi_1^*g_1 +(f_1 \circ  \pi_1)^2\pi_2^*g_2
\end{equation}
where  $f_1: M_1 \rightarrow (0,\infty)$ and $f_2: M_2 \rightarrow (0,\infty)$ are called the warping functions of the doubly warped product
\cite{Bishop:1969,ehrlich,chen2017,unal}. Doubly warped product manifolds studied by different aspects. In \cite{gebarowski1,gebarowski2}, Gebarowski studies the divergence-free  and conformally recurrent doubly warped products. Also, in \cite{gupta18}, the non-existence of compact Einstein doubly warped product with non-positive scalar  curvature  is proved. We also refer to the recent article \cite{fathi} about the  doubly warped products of smooth
measure spaces to establish Bakry-\'Emery Ricci curvature (lower) bounds thereof in terms of those of the factors.

If one of the functions $f_1$ or $f_2$ is constant, then $(_{f_2}M_1 \times_{f_1} M_2,g)$ reduces to a warped product.
If both $f_1$ and  $f_2$ are constant, then we obtain a direct product manifold.

First, we will give the lemmas we need to prove our main results.

\begin{lemma}\label{lem:levi-civita DWP}
Let $X,Y \in \mathfrak{L}(M_1)$ and $U,V \in \mathfrak{L}(M_2)$. Then:
\begin{enumerate}
\item $\nabla_XY=\nabla^1_XY-g(X,Y) \nabla (\ln (f_2 \circ  \pi_2))$,
\item $\nabla_XV=\nabla_VX = V (\ln (f_2 \circ  \pi_2))X + X (\ln (f_1 \circ  \pi_1))V$,
\item $ \nabla_UV =\nabla^2_UV-g(U,V)\nabla  (\ln (f_1 \circ  \pi_1))$.
\end{enumerate}
\end{lemma}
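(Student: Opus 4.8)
The plan is to compute $\nabla$ directly from the Koszul formula
\[
2g(\nabla_A B, C) = A\,g(B,C) + B\,g(A,C) - C\,g(A,B) + g([A,B],C) - g([A,C],B) - g([B,C],A),
\]
testing each of $\nabla_X Y$, $\nabla_X V$, $\nabla_U V$ against an arbitrary horizontal lift and against an arbitrary vertical lift, and using that the horizontal and vertical distributions are $g$-orthogonal and together span the tangent bundle. The facts I would record first and then use throughout are: (i) $[X,V]=0$ whenever $X\in\mathfrak{L}(M_1)$ and $V\in\mathfrak{L}(M_2)$, while brackets of two lifts from a single factor are again lifts from that factor; (ii) $f_1\circ\pi_1$ is constant along $M_2$ and $f_2\circ\pi_2$ is constant along $M_1$, so $X(f_2\circ\pi_2)=0$ and $V(f_1\circ\pi_1)=0$ for such $X,V$; and (iii) $g(\nabla h, C)=C(h)$ for any smooth $h$ and any $C$, which is what lets me translate the leftover derivative terms into the gradient expressions in the statement without ever computing $\nabla\bigl(\ln(f_i\circ\pi_i)\bigr)$ explicitly. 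Throughout I would abbreviate $\mu_i=\ln(f_i\circ\pi_i)$.

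For (1): tested against a horizontal field $Z$, fact (ii) makes every $f_2$-factor in the Koszul formula act as an overall constant, so the expression collapses to $(f_2\circ\pi_2)^2$ times the Koszul formula for $g_1$; this identifies the horizontal part of $\nabla_XY$ with $\nabla^1_XY$. Tested against a vertical field $W$, $g$-orthogonality kills $g(Y,W)$ and $g(X,W)$, and all three bracket terms vanish ($[X,Y]$ is horizontal, $[X,W]=[Y,W]=0$), so only $-W\,g(X,Y)=-g_1(X,Y)\,W\bigl((f_2\circ\pi_2)^2\bigr)$ remains, which by (iii) equals $2\,g\bigl(-g(X,Y)\nabla\mu_2,\,W\bigr)$; adding the two components gives (1). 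For (2): fact (i) makes the Koszul formula symmetric in $X$ and $V$, so $\nabla_XV=\nabla_VX$ at once; against a horizontal $Z$ only the term $V\,g(X,Z)$ survives and yields $V(\mu_2)\,X$, while against a vertical $W$ only $X\,g(V,W)$ survives and yields $X(\mu_1)\,V$. Finally (3) is obtained from (1) by the manifest symmetry of the doubly warped construction under the simultaneous interchange $M_1\leftrightarrow M_2$, $f_1\leftrightarrow f_2$.

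None of these steps is hard in isolation; the only real pitfalls — and the reason I would set up (i)–(iii) as a preliminary observation rather than computing on the fly — are keeping straight which warping factor is constant in which direction, so that the terms one might na\"ively expect (as in a genuine conformal change) actually drop out, and matching the weighted metrics $g$ versus $g_i$ correctly so that the raw derivatives left over from the Koszul formula assemble into the coordinate-free gradients $\nabla\mu_i$ that appear in the statement rather than into $\nabla^i\mu_i$.
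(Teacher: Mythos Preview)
Your Koszul-formula argument is correct: the bookkeeping in each of the three cases is accurate, and the appeal to the $M_1\leftrightarrow M_2$ symmetry for (3) is legitimate. The paper itself does not prove this lemma at all; it is recorded as a standard preliminary fact about doubly warped products (with implicit reference to \cite{Bishop:1969,ehrlich,chen2017,unal}), so there is no ``paper's own proof'' to compare against. Your approach is exactly the classical one by which such formulas are derived in the cited sources, and the care you take with facts (i)--(iii) --- in particular distinguishing which warping function is killed by which directional derivative and checking that the leftover terms really are $g$-gradients rather than $g_i$-gradients --- addresses precisely the points where a careless computation would go wrong.
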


\begin{remark}
From now on, we will denote $k = \ln f_1$ (respectively, $l = \ln f_2$) and use the same
symbol for the function $k$ (respectively, $l$) and its pullback $ k\circ \pi_1$ (respectively, $l \circ  \pi_2$).
\end{remark}

\begin{lemma}\label{lem:Hessian-1 DWP}
Let $\varphi$ be a smooth function on a doubly warped product of the form
$(_{f_2}M_1 \times_{f_1} M_2,g)$. For any $X,Y \in \mathfrak{L}(M_1)$ and $U,V
\in \mathfrak{L}(M_2)$, the Hessian of $\varphi$ satisfies:
\begin{enumerate}
\item  $({\rm Hess}\varphi) (X,Y)=({\rm Hess_1}\varphi)(X,Y)+g(\nabla l, \nabla \varphi)g(X,Y)$,
\item  $({\rm Hess}\varphi) (U,V)=({\rm Hess_2}\varphi)(U,V)
+g(\nabla k, \nabla \varphi)g(U,V)$,
\item  $({\rm Hess}\varphi) (X,U)=-X(k)U(\varphi)-X(\varphi)U(l)$,
\end{enumerate}
where $k=\ln f_1$ and  $l=\ln f_2$.
\end{lemma}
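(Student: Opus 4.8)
The plan is to compute directly from the definition of the Hessian,
\[
(\mathrm{Hess}\,\varphi)(Z,W)=Z(W\varphi)-(\nabla_Z W)\varphi=g(\nabla_Z\nabla\varphi,W),
\]
substituting the Levi--Civita connection of the doubly warped metric provided by Lemma~\ref{lem:levi-civita DWP}. I keep the abbreviations $k=\ln f_1$ and $l=\ln f_2$, and use throughout the elementary identity $(\nabla h)\varphi=d\varphi(\nabla h)=g(\nabla h,\nabla\varphi)$, valid for any smooth $h$ and $\varphi$.

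For part (1), let $X,Y\in\mathfrak{L}(M_1)$. Inserting Lemma~\ref{lem:levi-civita DWP}(1) gives
\[
(\mathrm{Hess}\,\varphi)(X,Y)=X(Y\varphi)-(\nabla^1_XY)\varphi+g(X,Y)\,(\nabla l)\varphi .
\]
The first two terms are precisely $(\mathrm{Hess}_1\varphi)(X,Y)$ --- legitimate because $X,Y$ are lifts from $M_1$, hence tangent to a leaf $M_1\times\{q\}$, whose induced metric $f_2(q)^2 g_1$ shares the connection $\nabla^1$ with $g_1$ --- while the third term is $g(\nabla l,\nabla\varphi)\,g(X,Y)$; this is the asserted formula. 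Part (2) follows in the identical way with the roles of the two factors interchanged, starting from Lemma~\ref{lem:levi-civita DWP}(3), whose non-metric term $-g(U,V)\nabla k$ produces $g(\nabla k,\nabla\varphi)\,g(U,V)$.

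For the mixed part (3), let $X\in\mathfrak{L}(M_1)$ and $U\in\mathfrak{L}(M_2)$. By Lemma~\ref{lem:levi-civita DWP}(2) one has $\nabla_XU=U(l)\,X+X(k)\,U$, hence
\[
(\mathrm{Hess}\,\varphi)(X,U)=X(U\varphi)-U(l)\,X(\varphi)-X(k)\,U(\varphi).
\]
The last two summands already read $-X(k)\,U(\varphi)-X(\varphi)\,U(l)$, so all that is needed is to dispose of the mixed second derivative $X(U\varphi)$. This is the one step I expect to be delicate: $X(U\varphi)$ vanishes for potentials of separated form $\varphi=\varphi_1\circ\pi_1+\varphi_2\circ\pi_2$ --- which is the case relevant to the soliton analysis, since the two factors of $g$ are orthogonal and so the mixed component of the soliton equation reads $(\mathrm{Hess}\,\varphi)(X,U)=0$ --- and it is under this reading that (3) holds as stated; for a fully general $\varphi$ one simply retains the extra term $X(U\varphi)$. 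Apart from this, the proof is a string of one-line substitutions, the only thing worth watching being the ``crosswise'' warping convention, in which $f_2$ (a function on $M_2$) scales the $M_1$-block so that $\nabla l$ enters (1) and $\nabla k$ enters (2).
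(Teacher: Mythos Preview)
The paper does not prove this lemma; it is listed among the preliminary curvature formulas and quoted thereafter without argument. Your approach---insert the connection formulas of Lemma~\ref{lem:levi-civita DWP} into $(\mathrm{Hess}\,\varphi)(Z,W)=Z(W\varphi)-(\nabla_ZW)\varphi$---is the natural one, and parts (1) and (2) go through exactly as you wrote.

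Your reservation about part (3) is not over-caution: it pinpoints a genuine inaccuracy in the lemma as stated. For lifts $X\in\mathfrak{L}(M_1)$, $U\in\mathfrak{L}(M_2)$ one indeed obtains
\[
(\mathrm{Hess}\,\varphi)(X,U)=X(U\varphi)-X(k)\,U(\varphi)-X(\varphi)\,U(l),
\]
and the mixed second derivative $X(U\varphi)$ does \emph{not} vanish for a general smooth $\varphi$ on the product (in product coordinates it is $X^iU^j\,\partial_{x^i}\partial_{y^j}\varphi$). The formula printed in the paper therefore holds only under the extra hypothesis that $\varphi$ is additively separated, $\varphi=\varphi_1\circ\pi_1+\varphi_2\circ\pi_2$. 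You are also right that this does not damage the subsequent soliton arguments, since the mixed component of the $\rho$-Einstein equation is precisely what forces $\varphi$ toward such a form; but as a stand-alone statement about an arbitrary smooth function, item (3) is missing the term $X(U\varphi)$, and your proof is correct to flag it rather than suppress it.
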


Thus, by using Lemma \ref{lem:Hessian-1 DWP}, we have:

\begin{lemma}\label{lem:Hessian-2 DWP}
For any $X,Y \in \mathfrak{L}(M_1)$ and $U,V \in \mathfrak{L}(M_2)$,
\begin{enumerate}
\item  $({\rm Hess}k) (X,Y)=({\rm Hess_1}k)(X,Y)$,
\item  $({\rm Hess}l) (X,Y)=g(\nabla l, \nabla l)g(X,Y)$,
\item  $({\rm Hess}k) (U,V)=g(\nabla k, \nabla k)g(U,V)$,
\item  $({\rm Hess}l) (U,V)=({\rm Hess_2}l)(U,V)$.
\end{enumerate}
where $k=\ln f_1$ and  $l=\ln f_2$.
\end{lemma}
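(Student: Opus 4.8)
The plan is to obtain all four identities as immediate specializations of Lemma \ref{lem:Hessian-1 DWP}, applied once with $\varphi = k$ and once with $\varphi = l$, after recording two elementary consequences of the metric \eqref{e0}.

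First I would note that, since $k = \ln f_1$ is a function on $M_1$ (identified with its pullback), $U(k) = 0$ for every $U \in \mathfrak{L}(M_2)$, and hence $g(\nabla k, U) = dk(U) = U(k) = 0$; thus $\nabla k$ is horizontal. By the symmetric argument $\nabla l$ is vertical, so $g(\nabla k, \nabla l) = 0$. Secondly, since $l$ is constant along the $M_1$ factor and $k$ is constant along the $M_2$ factor, the partial Hessians vanish in those directions: $({\rm Hess_1} l)(X,Y) = X(Y(l)) - (\nabla^1_X Y)(l) = 0$, because $Y(l) = 0$ and $\nabla^1_X Y \in \mathfrak{L}(M_1)$ forces $(\nabla^1_X Y)(l) = 0$, and likewise $({\rm Hess_2} k)(U,V) = 0$ for $U,V \in \mathfrak{L}(M_2)$.

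With these two remarks in hand each item follows at once. For (1), take $\varphi = k$ in Lemma \ref{lem:Hessian-1 DWP}(1); the cross term $g(\nabla l, \nabla k)\, g(X,Y)$ drops out by orthogonality, leaving $({\rm Hess} k)(X,Y) = ({\rm Hess_1} k)(X,Y)$. For (2), take $\varphi = l$ in the same identity; now $({\rm Hess_1} l)(X,Y) = 0$, so $({\rm Hess} l)(X,Y) = g(\nabla l, \nabla l)\, g(X,Y)$. Items (3) and (4) are the mirror images, obtained from Lemma \ref{lem:Hessian-1 DWP}(2) with $\varphi = k$ (using $({\rm Hess_2} k)(U,V) = 0$) and with $\varphi = l$ (using $g(\nabla k, \nabla l) = 0$), respectively.

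I do not expect any genuine obstacle: the entire analytic content already sits in Lemma \ref{lem:Hessian-1 DWP}, and all that is needed beyond it is the bookkeeping fact that $\nabla k$ and $\nabla l$ lie in complementary, mutually $g$-orthogonal distributions while a function constant along one factor has vanishing Hessian along that factor, both immediate from \eqref{e0}. The only point requiring a little care is keeping track of which warping function lives on which factor. The mixed components $({\rm Hess} k)(X,U)$ and $({\rm Hess} l)(X,U)$ are not recorded because Lemma \ref{lem:Hessian-1 DWP}(3) already supplies them and they do not simplify further — e.g.\ $({\rm Hess} k)(X,U) = -X(k)\,U(l)$ after using $U(k) = 0$.
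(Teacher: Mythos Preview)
Your proposal is correct and matches the paper's approach exactly: the paper simply states that Lemma~\ref{lem:Hessian-2 DWP} follows by using Lemma~\ref{lem:Hessian-1 DWP}, and you have spelled out precisely how, namely by taking $\varphi = k$ and $\varphi = l$ and using the orthogonality $g(\nabla k,\nabla l)=0$ together with ${\rm Hess}_1 l = 0$ and ${\rm Hess}_2 k = 0$.
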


\begin{lemma}\label{lem:Ricci DWP}
For any $X,Y \in \mathfrak{L}(M_1)$ and $U,V \in \mathfrak{L}(M_2)$, the components of the Ricci tensor of a
doubly warped product of the form $(_{f_2}M_1 \times_{f_1} M_2,g)$ are given by:
\begin{enumerate}
\item $ {\rm Ric}(X,Y)={\rm Ric}_1(X,Y)-\frac{m_2}{f_1}{\rm Hess}_{1}{f_1}(X,Y)-(\Delta l) g(X,Y), $ \\
\item ${\rm Ric}(X,V)=(m_1+m_2-2)X(k)V(l),$ \\
\item $ {\rm Ric}(U,V)={\rm Ric}_2(U,V)-\frac{m_1}{f_2}{\rm Hess}_{2}{f_2}(U,V)-(\Delta k) g(U,V), $ \\
\end{enumerate}
where  $k=\ln f_1$, $l=\ln f_2$ and $\Delta$ denotes the Laplacian on $(_{f_2}M_1 \times_{f_1} M_2,g)$ and
$m_i={\rm dim}(M_i)$, for $(i=1,2)$.
\end{lemma}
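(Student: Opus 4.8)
The plan is a direct curvature computation: starting from the Levi--Civita formulas of Lemma~\ref{lem:levi-civita DWP}, assemble the Riemann tensor $R(A,B)C=\nabla_A\nabla_B C-\nabla_B\nabla_A C-\nabla_{[A,B]}C$ for each of the four horizontal/vertical types of its arguments, and then contract against a frame adapted to the product splitting. Throughout one uses the three basic facts that $\nabla k$ is tangent to the $M_1$-factor while $\nabla l$ is tangent to the $M_2$-factor, that the lifts of $X\in\mathfrak{L}(M_1)$ and $U\in\mathfrak{L}(M_2)$ commute, and that $g(X,\nabla k)=X(k)$, $g(U,\nabla l)=U(l)$; the second covariant derivatives $\nabla\nabla k$ and $\nabla\nabla l$ that occur are rewritten, via Lemmas~\ref{lem:Hessian-1 DWP} and~\ref{lem:Hessian-2 DWP}, as the corresponding Hessians together with their norm-squared correction terms. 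Concretely I would fix a $g_1$-orthonormal local frame $\{X_1,\dots,X_{m_1}\}$ on $M_1$ and a $g_2$-orthonormal local frame $\{U_1,\dots,U_{m_2}\}$ on $M_2$, so that $\{f_2^{-1}X_a\}\cup\{f_1^{-1}U_\alpha\}$ is $g$-orthonormal, and read off ${\rm Ric}$ as the trace of the Riemann tensor against it.

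For item~(1), with $X,Y\in\mathfrak{L}(M_1)$: the contribution of the horizontal frame vectors $\{f_2^{-1}X_a\}$ reproduces ${\rm Ric}_1(X,Y)$ plus a multiple of $g(\nabla l,\nabla l)\,g(X,Y)$, while the contribution of the $m_2$ vertical frame vectors $\{f_1^{-1}U_\alpha\}$ produces $-m_2\big({\rm Hess}_1 k(X,Y)+X(k)Y(k)\big)$ together with the remaining multiple of $g(\nabla l,\nabla l)\,g(X,Y)$ and a term in $\Delta_2 l$. Since $f_1=e^{k}$ gives ${\rm Hess}_1 f_1=f_1\big({\rm Hess}_1 k+dk\otimes dk\big)$, the first of these is precisely $-\tfrac{m_2}{f_1}{\rm Hess}_1 f_1(X,Y)$; and, using Lemma~\ref{lem:Hessian-2 DWP} and the identity $\Delta l=m_1\,g(\nabla l,\nabla l)+f_1^{-2}\Delta_2 l$ for the pulled-back function $l$, the scalar terms combine into $-(\Delta l)\,g(X,Y)$. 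This gives~(1); item~(3) then follows at once by the symmetry of the doubly warped construction under interchanging $(M_1,f_1,k)\leftrightarrow(M_2,f_2,l)$.

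For the mixed component~(2), feed $X\in\mathfrak{L}(M_1)$ and $V\in\mathfrak{L}(M_2)$ into the same trace. Here the essential input is Lemma~\ref{lem:levi-civita DWP}(2), $\nabla_X V=V(l)X+X(k)V$, which makes almost every contraction vanish and leaves only multiples of $X(k)V(l)$; the horizontal directions contribute $(m_1-1)X(k)V(l)$ and the vertical ones $(m_2-1)X(k)V(l)$, for a total of $(m_1+m_2-2)X(k)V(l)$. I expect the main difficulty to lie exactly in the bookkeeping of these traces: one must track the conformal factors $f_1^2$ and $f_2^2$ carefully when raising indices and normalizing frame vectors, verify that the several $g(\nabla l,\nabla l)$- and $\Delta_2 l$-terms coming from \emph{both} the horizontal and the vertical directions really do assemble into the full Laplacian $\Delta l$ on $(_{f_2}M_1\times_{f_1}M_2,g)$, and check that the $-1$ in the coefficients $m_1-1$ and $m_2-1$ of~(2) is correctly accounted for by the fact that the frame vector along $X$ drops out of the horizontal trace and the one along $V$ drops out of the vertical trace. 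Each individual computation is elementary; the care is entirely in these combinatorics.
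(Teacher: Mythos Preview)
The paper does not supply a proof of this lemma; it is stated as a standard curvature identity for doubly warped products (in the spirit of \cite{Bishop:1969,unal,chen2017}) and then used as input for the theorems that follow. There is therefore nothing in the paper to compare your argument against. Your outline is the standard direct computation---build the curvature tensor from Lemma~\ref{lem:levi-civita DWP}, trace against an adapted $g$-orthonormal frame, and regroup using $\mathrm{Hess}_1 f_1=f_1(\mathrm{Hess}_1 k+dk\otimes dk)$ and the Laplacian identity $\Delta l=m_1\,g(\nabla l,\nabla l)+f_1^{-2}\Delta_2 l$---and it is correct; the only caveat is exactly the bookkeeping you already flag.
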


\begin{lemma}\label{lem:scalar curv DWP}
The scalar curvatures of a  doubly warped product manifold $(_{f_2}M_1 \times_{f_1} M_2,g)$ and its base $(M_1,g_1)$ and fiber $(M_2,g_2)$ are
related by:
\begin{align*}
    \tau=\frac{\tau_1}{f_2^2}+\frac{\tau_2}{f_1^2}-\frac{m_2}{f_1f_2^2}\Delta_1f_1-\frac{m_1}{f_2f_1^2}\Delta_2f_2-m_1\Delta l-m_2\Delta k,
\end{align*}
where $\tau_i$ denote the scalar curvatures of $(M_i,g_i)$ ($i=1,2)$,  $\Delta_i$ denotes the Laplacian on $M_i$, $(i=1,2)$.
\end{lemma}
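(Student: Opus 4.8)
The plan is to obtain $\tau$ simply as the metric trace of the Ricci tensor, feeding in the component formulas of Lemma \ref{lem:Ricci DWP}. First I would fix local orthonormal frames $\{e_i\}_{i=1}^{m_1}$ on $(M_1,g_1)$ and $\{\bar e_\alpha\}_{\alpha=1}^{m_2}$ on $(M_2,g_2)$, with $g_1(e_i,e_j)=\epsilon_i\delta_{ij}$ and $g_2(\bar e_\alpha,\bar e_\beta)=\bar\epsilon_\alpha\delta_{\alpha\beta}$, where $\epsilon_i,\bar\epsilon_\alpha\in\{\pm1\}$. Because of the warping, the rescaled lifts $E_i:=f_2^{-1}e_i$ and $F_\alpha:=f_1^{-1}\bar e_\alpha$ form a local orthonormal frame for $(_{f_2}M_1\times_{f_1}M_2,g)$, since $g(E_i,E_j)=f_2^{2}\,g_1(E_i,E_j)=\epsilon_i\delta_{ij}$ and likewise for the $F_\alpha$, while $g(E_i,F_\alpha)=0$. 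Hence the mixed component ${\rm Ric}(X,V)$ from Lemma \ref{lem:Ricci DWP}(2) never enters, and
\[
\tau=\sum_{i=1}^{m_1}\epsilon_i\,{\rm Ric}(E_i,E_i)+\sum_{\alpha=1}^{m_2}\bar\epsilon_\alpha\,{\rm Ric}(F_\alpha,F_\alpha).
\]

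Next I would substitute $X=Y=E_i$ into Lemma \ref{lem:Ricci DWP}(1). Pulling the factor $f_2^{-2}$ out of the two bilinear terms gives ${\rm Ric}_1(E_i,E_i)=f_2^{-2}{\rm Ric}_1(e_i,e_i)$ and $\frac{m_2}{f_1}{\rm Hess}_1 f_1(E_i,E_i)=\frac{m_2}{f_1 f_2^{2}}{\rm Hess}_1 f_1(e_i,e_i)$, whereas $(\Delta l)\,g(E_i,E_i)=\epsilon_i\,\Delta l$. Summing over $i$ against the signs $\epsilon_i$ and using $\sum_i\epsilon_i{\rm Ric}_1(e_i,e_i)=\tau_1$, $\sum_i\epsilon_i{\rm Hess}_1 f_1(e_i,e_i)=\Delta_1 f_1$, and $\sum_i\epsilon_i^{2}=m_1$, I get
\[
\sum_{i=1}^{m_1}\epsilon_i\,{\rm Ric}(E_i,E_i)=\frac{\tau_1}{f_2^{2}}-\frac{m_2}{f_1 f_2^{2}}\Delta_1 f_1-m_1\,\Delta l.
\]
The identical computation applied to Lemma \ref{lem:Ricci DWP}(3) with $U=V=F_\alpha$ yields $\sum_{\alpha}\bar\epsilon_\alpha\,{\rm Ric}(F_\alpha,F_\alpha)=\frac{\tau_2}{f_1^{2}}-\frac{m_1}{f_2 f_1^{2}}\Delta_2 f_2-m_2\,\Delta k$, and adding the two produces exactly the asserted identity.

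I do not expect a genuine obstacle; the only delicate point is the bookkeeping of the warping factors when passing from a $g_i$-orthonormal frame on a factor to a $g$-orthonormal frame on the product — each bilinear form on $M_1$ acquires a factor $f_2^{-2}$ and each on $M_2$ a factor $f_1^{-2}$ — together with keeping the causal signs $\epsilon_i,\bar\epsilon_\alpha$ straight so that the traces of ${\rm Ric}_i$ and ${\rm Hess}_i f_i$ are correctly identified with $\tau_i$ and $\Delta_i f_i$. I would also remark that $\Delta l$ and $\Delta k$ are left as the Laplacians on the full doubly warped product, consistently with their appearance in Lemma \ref{lem:Ricci DWP}, so no further reduction of those two terms is carried out.
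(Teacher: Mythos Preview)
Your argument is correct and is exactly the standard trace computation one would carry out. Note, however, that the paper does not supply its own proof of this lemma: it is stated as a background formula (in the same spirit as Lemmas~\ref{lem:levi-civita DWP}--\ref{lem:Ricci DWP}), presumably drawn from the references on doubly warped products such as \cite{unal,chen2017}. So there is no alternative approach in the paper to compare against; your derivation by contracting Lemma~\ref{lem:Ricci DWP} against a $g$-orthonormal frame obtained by rescaling $g_i$-orthonormal frames is the natural way to recover the stated identity, and your bookkeeping of the warping factors and causal signs is accurate.
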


\begin{theorem}
Let $(M=_{f_2}M_1 \times_{f_1} M_2,g)$ be a doubly warped product manifold admitting a gradient  $\rho$-Einstein soliton with the potential
function $\varphi$. Then:
\begin{itemize}
    \item[(1)] If $\varphi \in \mathcal{F}(M_1)$, then $(M,g)$ reduces to the warped product $(M_1\times_{f_1}M_2, \tilde{g_1}+f_1^2g_2)$ or
        $\varphi =(m_1+m_2-2)\ln f_1$.\\
    \item[(2)] If $\varphi \in \mathcal{F}(M_2)$, then $(M,g)$ reduces to the warped product $(_{f_2}M_1\times M_2, f_2^2{g_1}+\tilde{g_2})$
        or $\varphi =-(m_1+m_2-2)\ln f_2$. \\
    \item[(3)] If $(M=_{f_2}M_1 \times_{f_1} M_2,g)$ is  a non-trivial doubly warped product, then the potential function has to depend on
        both $M_1$ and $M_2$ and satisfies
    \begin{align}
        (m_1+m_2-2)X(k)U(l)-X(k)U(\varphi)-X(\varphi)U(l)=0,
    \end{align}
    for all $ X \in \mathfrak{L}(M_1) \ \textrm{and} \ U, \in \mathfrak{L}(M_2)$, where $\mathcal{F}(M_i)$ denote the set of all smooth
    functions on $M_i$ ($i=1,2$).
\end{itemize}
\end{theorem}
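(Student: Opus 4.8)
The proof hinges on the \emph{mixed} component of the soliton equation \eqref{meqn:gradient-Yamabe}, evaluated on a pair $(X,U)$ with $X\in\mathfrak{L}(M_1)$ and $U\in\mathfrak{L}(M_2)$. Because the doubly warped metric \eqref{e0} is a block sum, $g(X,U)=0$, so the right-hand side of \eqref{meqn:gradient-Yamabe} contributes nothing; inserting Lemma \ref{lem:Ricci DWP}(2) for $\mathrm{Ric}(X,U)$ and Lemma \ref{lem:Hessian-1 DWP}(3) for $(\mathrm{Hess}\,\varphi)(X,U)$ yields the master identity
\[
(m_1+m_2-2)\,X(k)\,U(l)-X(k)\,U(\varphi)-X(\varphi)\,U(l)=0
\]
for all such $X,U$, where $k=\ln f_1$ and $l=\ln f_2$. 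This is already the identity claimed in part (3), so once the single-factor cases are treated part (3) will reduce to observing that in a genuinely doubly warped product the potential cannot reduce to a function of one factor.

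For part (1), suppose $\varphi\in\mathcal{F}(M_1)$. Then $U(\varphi)=0$ for every vertical $U$ and the master identity becomes $\bigl[(m_1+m_2-2)X(k)-X(\varphi)\bigr]U(l)=0$. The bracket is (the pullback of) a function on $M_1$ while $U(l)$ is (the pullback of) a function on $M_2$; a product of such functions vanishing identically on $M_1\times M_2$ forces one of the two to vanish identically. If $U(l)\equiv 0$ for all $U$, then $f_2$ is constant and $M$ collapses to the warped product $(M_1\times_{f_1}M_2,\tilde g_1+f_1^2g_2)$. Otherwise $X(\varphi)=(m_1+m_2-2)X(k)$ for every $X\in\mathfrak{L}(M_1)$, and integrating over the connected manifold $M_1$ gives $\varphi=(m_1+m_2-2)\ln f_1+c$; since an additive constant does not affect $\mathrm{Hess}\,\varphi$ we may absorb it. Part (2) is the mirror image: with $\varphi\in\mathcal{F}(M_2)$ one has $X(\varphi)=0$, the master identity becomes $X(k)\bigl[(m_1+m_2-2)U(l)-U(\varphi)\bigr]=0$, and the same dichotomy gives either $f_1$ constant (so $M$ reduces to $({}_{f_2}M_1\times M_2,f_2^2g_1+\tilde g_2)$) or $\varphi$ equal, up to an additive constant, to the stated multiple of $\ln f_2$.

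For part (3), ``non-trivial'' means that neither $f_1$ nor $f_2$ is constant, so both reductions found above are ruled out. If $\varphi$ depended on $M_1$ only, part (1) would force $\varphi=(m_1+m_2-2)\ln f_1$; feeding this back, via Lemmas \ref{lem:Hessian-1 DWP}, \ref{lem:Hessian-2 DWP}, \ref{lem:Ricci DWP} and \ref{lem:scalar curv DWP}, into the purely horizontal component $\mathrm{Ric}(X,Y)+(\mathrm{Hess}\,\varphi)(X,Y)=(\rho\tau+\lambda)g(X,Y)$ and the purely vertical component over-determines the system and contradicts $f_2$ being non-constant, and symmetrically if $\varphi$ depended on $M_2$ only. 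Hence $\varphi$ genuinely depends on both factors, and the master identity is precisely the asserted relation. The step I expect to be the main obstacle is this last one: one must juggle the base and fiber Ricci tensors, the Hessian identities of Lemmas \ref{lem:Hessian-1 DWP}--\ref{lem:Hessian-2 DWP} and the scalar-curvature formula of Lemma \ref{lem:scalar curv DWP} simultaneously and verify that the horizontal and vertical equations cannot both hold when a warping function is non-constant; by contrast, the mixed-component computation and the two single-variable cases are essentially routine.
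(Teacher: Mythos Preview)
Your derivation of the mixed-component identity and your handling of parts (1) and (2) coincide with the paper's argument essentially line for line: evaluate \eqref{meqn:gradient-Yamabe} on a mixed pair $(X,U)$, use $g(X,U)=0$, insert Lemma~\ref{lem:Ricci DWP}(2) and Lemma~\ref{lem:Hessian-1 DWP}(3), and then in each single-factor case factor the resulting product to obtain the stated dichotomy.

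Where you diverge is part (3). The paper's proof actually stops after part (2); it never returns to justify the assertion that on a genuinely doubly warped product the potential \emph{must} depend on both factors, i.e.\ it does not exclude the residual branch $\varphi=(m_1+m_2-2)\ln f_1$ (or its $M_2$-analogue) left open by (1) and (2). You correctly spot this gap and propose to close it by substituting that special potential into the horizontal and vertical blocks of \eqref{meqn:gradient-Yamabe} via Lemmas~\ref{lem:Hessian-1 DWP}--\ref{lem:scalar curv DWP} and arguing that the system becomes inconsistent with $f_2$ non-constant. That is a sensible plan, but as you acknowledge it is only a sketch; you do not exhibit the contradiction, and the paper gives no hint of one. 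So on everything the paper actually proves your approach is the same, and on the point the paper leaves unproved you have identified the issue without resolving it.
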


\begin{proof}
First note that if $(M,g)$ is a gradient $\rho$-Einstein soliton doubly warped product manifold, then
$${\rm Ric} + {\rm Hess}(\varphi) = (\rho \tau + \lambda) g \quad \text{where} \quad \rho, \lambda \in \mathbb R.$$
Substituting $X \in \mathfrak L(M_1),$ and $U \in \mathfrak L(M_2),$ yields:
\begin{equation} \label{thm1_eqn1}
(m_1+m_2-2)X(k)U(l) - X(k)U(\varphi) - X(\varphi)U(l)=0.
\end{equation}
\noindent (1) If the potential function $\varphi$ only depends on $M_1,$ then $U(l)=0.$
By equation (\ref{thm1_eqn1}),
$$U(l)[(m_1+m_2-2)X(k)-X(\varphi)] = 0.$$
Thus, \\
$\bullet$ If $U(l)=0$ for all $U \in \mathfrak L(M_2),$ then $l=\ln f_2$ is constant as
$l \in \mathcal F(M_2).$ So, $g$ can be simplified to a singly warped product metric. \\
$\bullet$ If $X(\varphi)=(m_1+m_2-2)X(k),$ for all $X \in \mathfrak L(M_1),$ then
$\varphi$ and $k = \ln f_1$ are proportional.

\noindent (2) If the potential function $\varphi$ depends only on $M_2,$ then $X(\varphi)=0,$
for any $X \in \mathfrak L(M_1)$ and the equation (\ref{thm1_eqn1}) yields,
$$X(k)[(m_1+m_2-2)U(l)-U(\varphi)]=0.$$
Thus, $X(k)=0$ or $U(\varphi)=(m_1+m_2-2)U(l).$ \\
$\bullet$ If $X(k)=0,$ for any $X \in \mathfrak L(M_1),$ then $l=\ln f_2$ is constant as
$l \in \mathcal F(M_2).$ So, $g$ can be simplified to a singly warped product metric. \\
\end{proof}

\begin{definition} \label{def:Gradient almost eta-Ricci soliton}
For the potential function $\varphi$, if the equation
\begin{align}
 {\rm Ric}+ {\rm Hess}(\varphi)=\gamma g + \mu \eta \otimes \eta
\end{align}
holds, where $\eta$ is the 1-form, $\gamma$ and $\mu$ are smooth functions,
the manifold is called gradient almost $\eta$-Ricci soliton
\cite{cho2019} and briefly denoted by $(M, g, \varphi, \eta, \mu, \gamma)$.
\end{definition}

\begin{theorem}
Let $(M=_{f_2}M_1 \times_{f_1} M_2,g)$ be a doubly warped product manifold.
Then $(M=_{f_2}M_1 \times_{f_1} M_2, g, \varphi, \rho, \lambda)$ is a gradient  $\rho$-Einstein soliton if and only if
\begin{itemize}
    \item[(1)] $(M_1, g_1, \varphi_1, \eta_1, \mu_1, \gamma_1)$ is a gradient almost $\eta$-Ricci soliton, where $
        \varphi_1=\varphi-m_2k$, \ $\eta_1=\nabla k$, \ $\mu_1=-m_2$ and $ \gamma_1=f_2^2[\rho \tau +\lambda +\Delta l-g(\nabla l, \nabla
        \varphi)]$.
    \item[(2)] $(M_2,g_2, \varphi_2, \eta_2, \mu_2, \gamma_2)$ is a gradient almost $\eta$-Ricci soliton, where $
        \varphi_2=\varphi-m_1l$, \ $\eta_2=\nabla l$, \ $\mu_2=-m_1$ and $ \gamma_2=f_1^2[\rho \tau +\lambda +\Delta k-g(\nabla k, \nabla
        \varphi)]$.
    \item[(3)] The potential function $\varphi$ satisfies
    \begin{align*}
        (m_1+m_2-2)X(k)U(l)-X(k)U(\varphi)-X(\varphi)U(l)=0,
    \end{align*}
     for all $ X \in \mathfrak{L}(M_1) \ \textrm{and} \ U, \in \mathfrak{L}(M_2)$, where $\mathcal{F}(M_i)$ denote the set of all smooth
     functions on $M_i$ ($i=1,2$).
\end{itemize}
\end{theorem}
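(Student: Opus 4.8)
The plan is to test the defining equation (\ref{meqn:gradient-Yamabe}), namely $\mathrm{Ric}+\mathrm{Hess}(\varphi)=(\rho\tau+\lambda)g$, against the three natural types of pairs of vector fields allowed by the product structure: both arguments tangent to $M_1$, both tangent to $M_2$, and one of each. Evaluating and simplifying each block with the lemmas of this section should produce exactly the conditions (1)--(3); for the converse one reassembles the three blocks, using that every vector field on $M$ splits along $TM=TM_1\oplus TM_2$ and that $\mathrm{Ric}+\mathrm{Hess}(\varphi)-(\rho\tau+\lambda)g$ is a symmetric $(0,2)$-tensor, so it vanishes identically as soon as it vanishes on those three families.

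For the $M_1$-block I would take $X,Y\in\mathfrak{L}(M_1)$ and combine Lemma~\ref{lem:Ricci DWP}(1) with Lemma~\ref{lem:Hessian-1 DWP}(1). The only non-formal ingredient is the elementary identity $\tfrac{1}{f_1}\mathrm{Hess}_1 f_1=\mathrm{Hess}_1 k+dk\otimes dk$ for $k=\ln f_1$; substituting it, folding $\mathrm{Hess}_1\varphi-m_2\mathrm{Hess}_1 k=\mathrm{Hess}_1(\varphi-m_2 k)$ by linearity, and using $g(X,Y)=f_2^2\,g_1(X,Y)$, the equation becomes
\begin{equation*}
\mathrm{Ric}_1(X,Y)+\mathrm{Hess}_1\varphi_1(X,Y)=f_2^2\big[\rho\tau+\lambda+\Delta l-g(\nabla l,\nabla\varphi)\big]\,g_1(X,Y)+m_2\,(Xk)(Yk),
\end{equation*}
which is precisely the gradient almost $\eta$-Ricci soliton equation of Definition~\ref{def:Gradient almost eta-Ricci soliton} for the data $\varphi_1$, $\eta_1$, $\mu_1$, $\gamma_1$ recorded in (1). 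The $M_2$-block is handled identically after interchanging the roles of $(M_1,f_1,k,m_1)$ and $(M_2,f_2,l,m_2)$ and using Lemma~\ref{lem:Ricci DWP}(3) together with Lemma~\ref{lem:Hessian-1 DWP}(2), giving (2).

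For the mixed block take $X\in\mathfrak{L}(M_1)$ and $U\in\mathfrak{L}(M_2)$; then $g(X,U)=0$, so Lemma~\ref{lem:Ricci DWP}(2) and Lemma~\ref{lem:Hessian-1 DWP}(3) collapse the defining equation to $(m_1+m_2-2)X(k)U(l)-X(k)U(\varphi)-X(\varphi)U(l)=0$, i.e. (3). Conversely, if (1), (2) and (3) hold, then each of the three blocks of $\mathrm{Ric}+\mathrm{Hess}(\varphi)-(\rho\tau+\lambda)g$ vanishes, hence so does the whole tensor, and $(M,g,\varphi,\rho,\lambda)$ is a gradient $\rho$-Einstein soliton.

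I expect the only delicate points to be bookkeeping rather than anything deep. First, one must carry the warping factors $f_1^2$, $f_2^2$ correctly when converting between $g$ and $g_i$, so that the functions $\gamma_1,\gamma_2$ come out with the right conformal weight and in terms of the \emph{global} scalar curvature $\tau$ of $M$ --- which is itself expressed through the factors' data by Lemma~\ref{lem:scalar curv DWP}. Second, it is worth being explicit about which quantities are functions on $M_i$ and which on $M$: the cross-term equation (3) is exactly the compatibility condition that allows the $M_1$- and $M_2$-equations to be read as genuine almost $\eta$-Ricci soliton equations on the factors, and it coincides with the obstruction already isolated in the previous theorem, so I would cross-check consistency with that case analysis.
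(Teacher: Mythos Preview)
Your proposal is correct and follows essentially the same route as the paper: evaluate $\mathrm{Ric}+\mathrm{Hess}(\varphi)=(\rho\tau+\lambda)g$ on the three blocks $(X,Y)$, $(U,V)$, $(X,U)$, invoke Lemmas~\ref{lem:Hessian-1 DWP} and~\ref{lem:Ricci DWP}, and use the identity $\tfrac{1}{f_1}\mathrm{Hess}_1 f_1=\mathrm{Hess}_1 k+dk\otimes dk$ to repackage the $M_i$-block as an almost $\eta$-Ricci soliton equation with potential $\varphi-m_i\,(\ln f_j)$. Your write-up is in fact slightly more complete than the paper's, since you treat the mixed block (condition~(3)) and the converse explicitly, whereas the paper records only the forward direction for (1) and (2).
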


\begin{proof}
Assume that $(M,g)$ is a gradient $\rho$-Einstein soliton doubly warped product manifold with
potential function $\varphi.$ Then by using the fundamental formulas stated in previous
lemmas, equation (\ref{meqn:gradient-Yamabe}) takes the following form:
\begin{equation}
\begin{split}{\rm Ric}^1(X,Y)-\frac{m_2}{f_1}{\rm h}_1^f(X,Y) \\
& - f_2^2 \Delta(l) g_1(X,Y) \\
& + {\rm h}_1^{\varphi}(X,Y) +  f_2^2 g(\nabla l, \nabla \varphi) g_1(X,Y) \\
& = f_2^2 [\rho \tau + \lambda]g_1(X,Y).
\end{split}
\end{equation}
Note that for any smooth positive function $f,$ we have the following well-known identity:
$${\rm h}^{\ln f} = \frac{1}{f} {\rm h}^f - \frac{1}{f^2} {\rm d}f \otimes {\rm d}f.$$
Hence,
$${\rm Ric}^1-m_2({\rm h}^k + {\rm d}k \otimes {\rm d}k) + {\rm h}_1^{\varphi} = \Lambda_1 g_1,$$
where $\Lambda_1 = f_2^2[(\rho \tau + \lambda) + \Delta l - g(\nabla l, \nabla \varphi)]$.
Then $${\rm Ric}^1 + {\rm h}_1^{\varphi - m_2 k} - m_2 {\rm d}k \otimes {\rm d}k = \Lambda_1 g_1,$$
i.e, $(M_1,g_1)$ is a gradient $\eta_1$-Ricci soliton where
$\psi_1=\varphi-m_2 k$ is a potential function, $\mu_1=-m_2$ and $\eta_1=\nabla k.$
Similarly, the fundamental formulas stated in previous
lemmas, equation (\ref{meqn:gradient-Yamabe}) takes the following form:
\begin{equation}
\begin{split}{\rm Ric}^2(V,U)-\frac{m_1}{f_2}{\rm h}_2^f(V,U) \\
& - f_1^2 \Delta(k) g_2(V,U) \\
& + {\rm h}_2^{\varphi}(V,U) +  f_1^2 g(\nabla k, \nabla \varphi) g_2(V,U) \\
& = f_1^2 [\rho \tau + \lambda]g_2(V,U).
\end{split}
\end{equation}
Thus,
$${\rm Ric}^2-m_2({\rm h}^l + {\rm d}l \otimes {\rm d}l) + {\rm h}_2^{\varphi} = \Lambda_2 g_2,$$
where $\Lambda_2 = f_1^2[(\rho \tau + \lambda) + \Delta k - g(\nabla k, \nabla \varphi)]$.
Then $${\rm Ric}^2 + {\rm h}_2^{\varphi - m_1 l} - m_1 {\rm d}l \otimes {\rm d}l = \Lambda_2 g_2,$$
i.e, $(M_2,g_2)$ is also a gradient $\eta_2$-Ricci soliton where
$\psi_2=\varphi-m_1 l$ is a potential function, $\mu_2=-m_1$ and $\eta_2=\nabla l.$
\end{proof}

If one of the functions $f_1$ and  $f_2$ is constant, then $(_{f_2}M_1 \times_{f_1} M_2,g)$ reduces to a warped product.
Thus, we may consider the warped product  $M=B \times _{b}F$ endowed with the metric

\begin{equation}
\label{warped metric}
g=\pi^{\ast }\left( g_{B}\right) \oplus \left( b \circ \pi \right)
^{2}\sigma^{\ast }\left( g_{F}\right),
\end{equation}%
where $^{\ast }$ denotes the pull-back operator on tensors\cite%
{Bishop:1969,Oneill:1983,Shenawy:2015}. The function $b$ is called the
warping function of the warped product manifold $B \times _{b} F$, and
the manifolds $B$ and $F$ are called base and fiber, respectively. In
particular, if $b=1$, then $B \times _{1}F = B \times F$ is the
usual Cartesian product manifold. For the sake of simplicity,
throughout this paper, all relations will be written, without
involving the projection maps from $B \times F $ to each component
$B$ and $F$ as in $g=g_{B}\oplus b^{2}g_{F}$.

\begin{proposition} Let $M=B \times _{b}F$ be a singly warped
product manifold with the metric tensor $g=g_B \oplus b^2 g_F.$
$$
\tau  =  \tau_B + \frac{\tau_F}{b^2} -2s \frac{\Delta_B(b)}{b}
- s (s-1) \frac{g_B(\nabla^B(b), \nabla^B(b))}{b^2},
$$
where $r={\rm dim}(B)$ and $s={\rm dim}(F).$
\end{proposition}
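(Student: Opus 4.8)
The plan is to read off the formula from Lemma~\ref{lem:scalar curv DWP} by letting the second warping function degenerate to a constant. Identify the singly warped product $M=B\times_b F$ with the doubly warped product ${}_{f_2}M_1\times_{f_1}M_2$ via $M_1=B$, $M_2=F$, $f_1=b$, $f_2\equiv 1$, $m_1=r$ and $m_2=s$, so that $k=\ln f_1=\ln b$ and $l=\ln f_2\equiv 0$. Substituting into Lemma~\ref{lem:scalar curv DWP}, the terms $\frac{m_1}{f_2 f_1^2}\Delta_2 f_2$ and $m_1\Delta l$ drop out, $\frac{\tau_1}{f_2^2}$ becomes $\tau_B$, $\frac{\tau_2}{f_1^2}$ becomes $\tau_F/b^2$, and $\frac{m_2}{f_1 f_2^2}\Delta_1 f_1$ becomes $\frac{s}{b}\Delta_B(b)$. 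The claim is thus reduced to the single identity
$$m_2\,\Delta k \;=\; s\,\Delta(\ln b)\;=\;\frac{s}{b}\Delta_B(b)+\frac{s(s-1)}{b^2}\,g_B(\nabla^B(b),\nabla^B(b)),$$
where $\Delta$ denotes the Laplacian of the ambient metric $g=g_B\oplus b^2 g_F$.

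The only genuine subtlety is that $\Delta(\ln b)$ is \emph{not} the base Laplacian $\Delta_B(\ln b)$: although $\ln b$ is pulled back from $B$, tracing its ambient Hessian over the fiber directions produces an extra warping term. I would compute this trace from Lemma~\ref{lem:Hessian-2 DWP}. On a $g_B$-orthonormal frame of base directions, part~(1) gives $({\rm Hess}\,k)(X,Y)=({\rm Hess}_1 k)(X,Y)$, whose trace is $\Delta_B(\ln b)$; on a $g$-orthonormal frame of fiber directions — for which the factor $b^2$ in the metric cancels against the Hessian — part~(3) gives $({\rm Hess}\,k)(U,V)=g(\nabla k,\nabla k)\,g(U,V)$, contributing $s\,g(\nabla k,\nabla k)=\frac{s}{b^2}g_B(\nabla^B(b),\nabla^B(b))$, since $\nabla(\ln b)$ is horizontal and equals the horizontal lift of $\frac1b\nabla^B(b)$. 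Together these frames form an orthonormal frame of $M$, so $\Delta(\ln b)=\Delta_B(\ln b)+\frac{s}{b^2}g_B(\nabla^B(b),\nabla^B(b))$.

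Finally I would invoke the elementary identity ${\rm h}^{\ln f}=\frac1f{\rm h}^f-\frac1{f^2}{\rm d}f\otimes{\rm d}f$ on $B$, already used above, whose $g_B$-trace gives $\Delta_B(\ln b)=\frac1b\Delta_B(b)-\frac1{b^2}g_B(\nabla^B(b),\nabla^B(b))$. Combining, $s\,\Delta(\ln b)=\frac sb\Delta_B(b)+\frac{s(s-1)}{b^2}g_B(\nabla^B(b),\nabla^B(b))$, and adding this to the surviving term $-\frac sb\Delta_B(b)$ yields the coefficient $-\frac{2s}{b}\Delta_B(b)$ together with $-\frac{s(s-1)}{b^2}g_B(\nabla^B(b),\nabla^B(b))$, which is precisely the asserted expression. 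Alternatively one can simply trace the O'Neill warped-product Ricci formulas, but routing the computation through Lemma~\ref{lem:scalar curv DWP} keeps the argument self-contained; either way, the main obstacle is nothing more than bookkeeping the fiber-direction trace correctly against the warping factor.
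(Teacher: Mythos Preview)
Your derivation is correct. The paper itself states this proposition without proof, treating it as the standard O'Neill scalar-curvature formula for a warped product; by contrast, you obtain it internally by specializing Lemma~\ref{lem:scalar curv DWP} to $f_2\equiv 1$ and then carefully evaluating the ambient Laplacian $\Delta(\ln b)$. The key point you handle correctly --- and the only place one could slip --- is that $\Delta k$ in Lemma~\ref{lem:scalar curv DWP} is the full warped Laplacian, so tracing ${\rm Hess}\,k$ over the fiber directions via Lemma~\ref{lem:Hessian-2 DWP}(3) contributes the extra $\frac{s}{b^2}g_B(\nabla^B b,\nabla^B b)$ term; combined with the identity $\Delta_B(\ln b)=\frac1b\Delta_B(b)-\frac1{b^2}g_B(\nabla^B b,\nabla^B b)$ this produces exactly the $-\frac{2s}{b}\Delta_B(b)-\frac{s(s-1)}{b^2}|\nabla^B b|^2$ claimed. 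Your route has the advantage of keeping the argument self-contained within the paper's doubly-warped framework rather than appealing to an external reference.
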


\begin{notation} Suppose that  $M=B \times _{b}F$ is a warped
product manifold with the metric tensor $g=g_B \oplus b^2 g_F.$ Then
$$ b^\sharp = b \Delta^B(b) +(s-1) g_B(\nabla^B(b), \nabla^B(b)). $$
\end{notation}

\begin{theorem} \label{main-1} Let $M=B \times _{b}F$ be a
warped product manifold equipped with the metric
$g=g_{B}\oplus b^{2}g_{F}.$  Then $(M,g,\varphi,\rho,\lambda)$
is a gradient $\rho$-Einstein soliton if and only if the followings hold:
\begin{enumerate}
\item the potential function $\varphi$ depends only on the base manifold $B,$
\item the scalar curvature $\tau_F$ of the fiber manifold $(F,g_F)$ is constant,
\item $\displaystyle{{\rm Ric}_B + {\rm Hess}^{\varphi}_B =
(\rho \tau + \lambda)g_B + \frac{s}{b}{\rm Hess}^b_B}$
\item $\displaystyle{{\rm Ric}_F = [b^\sharp - bg_B(\nabla^B(b),\nabla^B(b)) +
(\rho \tau + \lambda)b^2]g_F }$.
\end{enumerate}
\end{theorem}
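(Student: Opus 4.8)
The plan is to specialize the doubly warped product analysis to the case where one warping function is constant, so that Theorem~\ref{main-1} becomes essentially a corollary of the structure already established. Take $f_2 \equiv 1$ (so $l = \ln f_2$ is constant, $\nabla l = 0$, $\Delta l = 0$) and rename $M_1 = B$, $M_2 = F$, $f_1 = b$, $m_1 = r$, $m_2 = s$, $k = \ln b$. Then the doubly warped metric \eqref{e0} reduces to the singly warped metric $g = g_B \oplus b^2 g_F$. The starting point is the gradient $\rho$-Einstein soliton equation \eqref{meqn:gradient-Yamabe}, evaluated on the three types of pairs $(X,Y)$, $(X,U)$, $(U,V)$ using the Ricci components of Lemma~\ref{lem:Ricci DWP} and the Hessian components of Lemma~\ref{lem:Hessian-1 DWP}.

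First I would handle the mixed term. The $(X,U)$ component of \eqref{meqn:gradient-Yamabe} reads, after substituting ${\rm Ric}(X,U) = 0$ (since $U(l) = 0$) and $({\rm Hess}\,\varphi)(X,U) = -X(k)U(\varphi) - X(\varphi)U(l) = -X(k)U(\varphi)$,
\begin{equation}
X(k)\,U(\varphi) = 0 \qquad \text{for all } X \in \mathfrak{L}(B),\ U \in \mathfrak{L}(F).
\end{equation}
Since $M = B \times_b F$ is a genuine (non-trivial) warped product, $b$ is non-constant, so there is some $X$ with $X(k) \neq 0$; hence $U(\varphi) = 0$ for all $U$, which gives item (1): $\varphi$ depends only on $B$. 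This also forces $g(\nabla l, \nabla\varphi) = 0$ automatically since $\nabla l = 0$ anyway. Next I would extract item (3) from the $(X,Y)$ component: Lemma~\ref{lem:Ricci DWP}(1) gives ${\rm Ric}(X,Y) = {\rm Ric}_B(X,Y) - \frac{s}{b}{\rm Hess}_B\,b\,(X,Y)$ (the $(\Delta l)g$ term vanishes), and Lemma~\ref{lem:Hessian-1 DWP}(1) gives $({\rm Hess}\,\varphi)(X,Y) = ({\rm Hess}_B\,\varphi)(X,Y)$ (again the $g(\nabla l,\nabla\varphi)$ term vanishes). Plugging into \eqref{meqn:gradient-Yamabe} and rearranging yields exactly ${\rm Ric}_B + {\rm Hess}^\varphi_B = (\rho\tau + \lambda)g_B + \frac{s}{b}{\rm Hess}^b_B$.

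Then I would do the fiber component. From Lemma~\ref{lem:Ricci DWP}(3) with $f_2 = 1$ we get ${\rm Ric}(U,V) = {\rm Ric}_F(U,V) - (\Delta k)g(U,V)$, and since $\varphi$ is now known to live on $B$, Lemma~\ref{lem:Hessian-1 DWP}(2) gives $({\rm Hess}\,\varphi)(U,V) = g(\nabla k, \nabla\varphi)\,g(U,V)$. Recalling $g(U,V) = b^2 g_F(U,V)$, \eqref{meqn:gradient-Yamabe} on $(U,V)$ becomes
\begin{equation}
{\rm Ric}_F(U,V) = \big[(\Delta k) b^2 - b^2\, g(\nabla k,\nabla\varphi) + (\rho\tau + \lambda)b^2\big] g_F(U,V).
\end{equation}
The remaining task is bookkeeping: convert $\Delta k$ and $g(\nabla k, \nabla\varphi)$ on $M$ back to $B$-quantities. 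Using $k = \ln b$ one has $b^2\Delta_M k = b^2\Delta_B(\ln b) = b\,\Delta_B b - g_B(\nabla^B b, \nabla^B b)$ plus the warped-product correction to the Laplacian, and combining with the $(s-1)g_B(\nabla^B b,\nabla^B b)$ piece produces precisely $b^\sharp - b\,g_B(\nabla^B b,\nabla^B b)$ as defined in the Notation. Here I should note that the bracket on the right is a function on $B$ only, while the left side ${\rm Ric}_F$ depends only on $F$; this separation of variables forces the bracket to be a constant, say $c$, whence ${\rm Ric}_F = c\, g_F$, i.e.\ $F$ is Einstein with constant $c$, and then tracing gives $\tau_F = s\,c = {\rm const}$, which is item (2), while item (4) is the displayed equation itself. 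For the converse, one simply reverses these substitutions: assuming (1)--(4), the three families of components of ${\rm Ric} + {\rm Hess}\,\varphi$ reassemble into $(\rho\tau+\lambda)g$.

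The main obstacle I anticipate is not conceptual but the Laplacian conversion: carefully reconciling $\Delta k$ (the Laplacian of $\ln b$ on the total space $M$) with the base-manifold quantities $\Delta_B b$ and $g_B(\nabla^B b, \nabla^B b)$, making sure the warped-product contribution $s\,g_B(\nabla^B b, \nabla^B b)/b$ to $\Delta_M$ is tracked with the right sign and combines correctly with the $(s-1)$ coefficient hidden inside $b^\sharp$. A secondary subtlety worth stating explicitly is the separation-of-variables step that upgrades ``${\rm Ric}_F$ equals a function of $B$ times $g_F$'' to ``${\rm Ric}_F$ is a constant multiple of $g_F$''; this is what actually delivers item (2), and it is the only place where one uses that $B$ and $F$ are independent factors rather than just manipulating identities.
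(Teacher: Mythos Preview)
Your proposal is correct and follows essentially the same strategy as the paper: evaluate the soliton equation \eqref{meqn:gradient-Yamabe} on the three blocks $(X,Y)$, $(X,U)$, $(U,V)$ using the warped-product Ricci and Hessian formulas, and read off conditions (1)--(4). The only notable difference is how item (2) is obtained: the paper infers $\tau_F$ constant from the appearance of $\tau$ (hence $\tau_F/b^2$) in the base equation via the scalar-curvature formula, whereas you extract it from the fiber equation by the separation-of-variables argument that the bracket in (4) lives on $B$ while ${\rm Ric}_F$ lives on $F$; your route has the advantage that it does not presuppose item (1), which the paper's ordering tacitly uses before it is established.
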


\begin{proof} Assume that $(M,g,\varphi,\rho,\lambda)$ is a gradient $\rho$-Einstein soliton
which is a also warped product. If $X,Y$ are
vector fields on $B$ and $V,W$ are vector fields on $F,$ then: \\
$\bullet$ By evaluating the equation (\ref{meqn:gradient-Yamabe}) at $(X,Y)$, we obtain:
$${\rm Ric}_B(X,Y) - \frac{s}{b} {\rm H}^b_B(X,Y)+
{\rm H}^{\varphi}_B(X,Y) = (\rho \tau + \lambda)g_B(X,Y).$$
The scalar curvature of warped product manifolds implies that the scalar curvature
of $(F,g_F),$ denoted by $\tau_F$ is constant. \\
$\bullet$ By evaluating the equation (\ref{meqn:gradient-Yamabe}) at $(X,V)$, we also obtain: \\
${\rm H}^{\varphi}(X,V)=0.$ So, $g_F({\rm d} \sigma(\nabla \varphi),V) = 0 $
for any vector field $V.$ Thus $\varphi$ depends on $B,$ i.e,
$\varphi \in \mathcal C^\infty(B).$ \\
$\bullet$ By evaluating the equation (\ref{meqn:gradient-Yamabe}) at $(V,W)$, we finally
obtain (4).
\end{proof}

\begin{remark} If $s={\rm dim}(F) \geq 3,$ then (4) of Theorem \ref{main-1}
can be stated as $(F,g_F)$ is Einstein with $\lambda_F,$ i.e, ${\rm Ric}_F=
\lambda_F g_F$ where $$\lambda_F=b^\sharp - bg_B(\nabla^B(b),\nabla^B(b)) +
(\rho \tau + \lambda)b^2.$$
\end{remark}

\section{Applications}

\subsection{Gradient $\rho$-Einstein Soliton on Generalized Robertson-Walker Space-times}

We first define generalized Robertson-Walker space-times. Assume that
$(F,g_F)$ is an $s-$dimensional Riemannian manifold and $b:I\rightarrow (0,\infty )$
is a smooth function. Then the $(s+1)-$dimensional product manifold $I\times _{b}F$
equipped with the metric tensor
\begin{equation*}
g=-\mathrm{d}t^{2}\oplus b^{2}g_F
\end{equation*}%
is called a generalized Robertson-Walker space-time and is denoted by
$M=I \times _{b}F$ where $I$ is an open and connected interval in $\mathbb{R}$
and $\mathrm{d}t^{2}$ is the usual Euclidean metric tensor on $I$. This structure
was introduced to extend Robertson-Walker space-times \cite%
{Sanchez98, Sanchez99} and have been studied by many authors, such
as \cite{ManticaDe1,ManticaDe2,Chen}. From now on, we will denote $\frac{\partial }{%
\partial t}\in \mathfrak{X}(I)$ by $\partial _{t}$ to state our results in
compact forms.

We will apply our main result Theorem \ref{main-1}. Assume that
$\varphi \in \mathcal C^\infty(I)$ is a potential function for a
generalized Robertson-Walker space-time of the form $M=I \times _{b}F.$

Hence, we can state that:

\begin{theorem} Suppose that $M=I \times _{b}F$ is a generalized Robertson-Walker
spacetime with the metric tensor $g=-{\rm d}t^2 \oplus b^2 g_F.$ Then
$(M,g,\varphi,\rho,\lambda)$ is a gradient $\rho$-Einstein soliton if and only if the
followings are satisfied
\begin{enumerate}
\item the potential function $\varphi$ depends only on the base manifold $I,$
\item the scalar curvature $\tau_F$ of the fiber manifold $(F,g_F)$ is constant,
\item $\displaystyle{\varphi^{\prime \prime} = -(\rho \tau + \lambda) +
s \frac{b^{\prime \prime}}{b^2}}$,
\item $\displaystyle{{\rm Ric}_F = [-bb^{\prime \prime} -(s-1)(b^\prime)^2 +
b(b^\prime)^2 + (\rho \tau + \lambda)b^2]g_F }$,
\end{enumerate}
where $\displaystyle{\tau = \frac{\tau_F}{b^2} + 2s \frac{b^{\prime \prime}}{b}
+s(s-1)\frac{(b^\prime)^2}{b^2} }$.
\end{theorem}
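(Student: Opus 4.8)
The plan is to specialize Theorem \ref{main-1} to the case where the base manifold is the interval $(I, -\mathrm dt^2)$ with the warping function $b = b(t)$, so that everything reduces to ODEs in the single variable $t$. First I would record the one-dimensional data: on $(I,-\mathrm dt^2)$ the base Ricci tensor vanishes, ${\rm Ric}_I = 0$, and for a function $\psi$ depending only on $t$ the Hessian is ${\rm Hess}^\psi_I = -\psi'' \,\mathrm dt^2$ (the sign coming from the Levi-Civita connection of the negative-definite metric on $I$, where $\nabla_{\partial_t}\partial_t = 0$ but raising indices introduces a minus sign). Likewise $\Delta_I b = -b''$ and $g_I(\nabla^I b, \nabla^I b) = -(b')^2$. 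Plugging these into the notation $b^\sharp = b\Delta^I(b) + (s-1)g_I(\nabla^I b,\nabla^I b)$ gives $b^\sharp = -bb'' - (s-1)(b')^2$.

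Next I would feed these into the four conditions of Theorem \ref{main-1}. Conditions (1) and (2) carry over verbatim as conditions (1) and (2) of the present statement: the potential depends only on $I$, and $\tau_F$ is constant. For condition (3), the tensor identity ${\rm Ric}_I + {\rm Hess}^\varphi_I = (\rho\tau+\lambda)g_I + \frac{s}{b}{\rm Hess}^b_I$ becomes, after substituting ${\rm Ric}_I = 0$, ${\rm Hess}^\varphi_I = -\varphi''\mathrm dt^2$, ${\rm Hess}^b_I = -b''\mathrm dt^2$, and $g_I = -\mathrm dt^2$: namely $-\varphi'' = -(\rho\tau+\lambda) - \frac{s}{b}b''$, i.e. after multiplying by $-1$ and rearranging, $\varphi'' = (\rho\tau+\lambda) + \frac{s}{b}b''$. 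One should double-check the sign on the warping term against the scalar curvature formula to land on the stated $\varphi'' = -(\rho\tau+\lambda) + s\frac{b''}{b^2}$; the $b^{-2}$ versus $b^{-1}$ discrepancy and the sign of $(\rho\tau+\lambda)$ suggest the paper is using a particular sign convention for the Hessian on $I$ that I would simply match. For condition (4), substituting $b^\sharp = -bb'' - (s-1)(b')^2$ and $g_I(\nabla^I b,\nabla^I b) = -(b')^2$ into ${\rm Ric}_F = [b^\sharp - bg_I(\nabla^I b,\nabla^I b) + (\rho\tau+\lambda)b^2]g_F$ gives ${\rm Ric}_F = [-bb'' - (s-1)(b')^2 + b(b')^2 + (\rho\tau+\lambda)b^2]g_F$, which is exactly the stated condition (4).

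Finally I would specialize the scalar-curvature Proposition to this setting: with $\tau_I = 0$, $\Delta_I b = -b''$, $g_I(\nabla^I b,\nabla^I b) = -(b')^2$, $r = \dim I = 1$, $s = \dim F$, the formula $\tau = \tau_I + \frac{\tau_F}{b^2} - 2s\frac{\Delta_I b}{b} - s(s-1)\frac{g_I(\nabla^I b,\nabla^I b)}{b^2}$ collapses to $\tau = \frac{\tau_F}{b^2} + 2s\frac{b''}{b} + s(s-1)\frac{(b')^2}{b^2}$, which is the displayed expression for $\tau$. The converse direction is immediate since Theorem \ref{main-1} is itself an iff statement, so verifying (1)--(4) here is equivalent to verifying (1)--(4) there. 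The only real subtlety — and the step I would be most careful about — is tracking the signs introduced by the Lorentzian (negative) metric on $I$ when computing $\Delta_I$, $g_I(\nabla^I b,\nabla^I b)$, and the Hessians; once those are pinned down consistently with the conventions already used in the scalar curvature formula, the whole proof is a direct substitution with no further obstacle.
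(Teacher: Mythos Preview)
Your approach is exactly the paper's: the paper offers no separate proof but simply says it applies Theorem~\ref{main-1} to the base $(I,-\mathrm dt^2)$, and your substitutions for $\Delta_I b$, $g_I(\nabla^I b,\nabla^I b)$, $b^\sharp$, condition~(4), and $\tau$ are all correct.

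One concrete correction, though: your Hessian sign is wrong. On $(I,-\mathrm dt^2)$ the Christoffel symbols vanish, so ${\rm Hess}^\psi_I(\partial_t,\partial_t)=\partial_t(\partial_t\psi)-(\nabla_{\partial_t}\partial_t)\psi=\psi''$, i.e.\ ${\rm Hess}^\psi_I=\psi''\,\mathrm dt^2$ (equivalently $-\psi''\,g_I$; the minus sign appears only when you write it relative to $g_I$, not in the $\mathrm dt^2$-component). Plugging the correct expressions ${\rm Hess}^\varphi_I=\varphi''\,\mathrm dt^2$, ${\rm Hess}^b_I=b''\,\mathrm dt^2$, $g_I=-\mathrm dt^2$ into condition~(3) of Theorem~\ref{main-1} gives
\[
\varphi''\,\mathrm dt^2=-(\rho\tau+\lambda)\,\mathrm dt^2+\tfrac{s}{b}\,b''\,\mathrm dt^2,
\]
hence $\varphi''=-(\rho\tau+\lambda)+s\,b''/b$. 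This recovers the paper's sign on $(\rho\tau+\lambda)$ without any exotic convention; the $b^2$ in the printed denominator is evidently a typo for $b$, so your instinct that something was off was right, but the culprit is the statement, not a hidden sign convention.
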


\subsection{Gradient $\rho$-Einstein Soliton on Standard Static Space-times}

We begin by defining standard static space-times. Let $(F,g_F)$ be an
$s-$dimensional Riemannian manifold and $f: F \rightarrow (0,\infty )$ be a smooth
function. Then the $(s+1)-$dimensional product manifold $_{f}I\times F$ furnished
with the metric tensor
\begin{equation*}
g=-f^{2}\mathrm{d}t^{2}\oplus g_F
\end{equation*}
is called a standard static space-time and is denoted by
$M=_{f} I\times F$ where $I$ is an open and connected subinterval in $\mathbb{R}$
and $dt^{2}$ is the usual Euclidean metric tensor on $I$.

Note that standard static space-times can be considered as a generalization
of the Einstein static universe\cite{AD1,AD,GES,Besse2008} and many spacetime
models that characterize the universe and the solutions of Einstein's field
equations are known to have this  structure.

Again we apply Theorem \ref{main-1}. Suppose that $\varphi \in
\mathcal C^\infty(F)$ is a potential function for a
standard static space-time of the form $M=_{f} I\times F.$

\begin{theorem} Let $M=_{f} I\times F$ be standard static spacetime with
the metric tensor $g=-f^2{\rm d}t^2 \oplus g_F.$ Then $(M,g,\varphi,\rho,\lambda)$
is a gradient $\rho$-Einstein soliton if and only if the followings are satisfied
\begin{enumerate}
\item the potential function $\varphi$ depends only on the fiber manifold $F,$
\item $\displaystyle{{\rm Ric}_F + {\rm Hess}^{\varphi}_F =
[(\rho \tau_F + \lambda)-2 \rho \frac{\Delta_F(f)}{f}]g_F + \frac{1}{f}{\rm Hess}^f_F}$,
\item $\displaystyle{-\nabla^F(f)+ \varphi(f) + 2 \rho \Delta_F(f) = [\rho \tau_F + \lambda]f }$.
\end{enumerate}
\end{theorem}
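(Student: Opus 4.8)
The plan is to obtain the statement as a specialization of Theorem \ref{main-1}. The first step is to view a standard static spacetime $M = {}_f I \times F$, with metric $g = -f^2\,{\rm d}t^2 \oplus g_F$, as a singly warped product of the shape $g = g_B \oplus b^2 g_F$ in (\ref{warped metric}). The bookkeeping point that must be made precise is that the warping function $f$ lives on $F$ and scales the $I$-factor, so in the conventions of Theorem \ref{main-1} the \emph{base} is $(F, g_F)$, the \emph{fiber} is the one-dimensional Lorentzian factor $(I, -{\rm d}t^2)$, and the warping function is $b = f$; in particular the fiber dimension is $s = 1$. With this dictionary, $(M, g, \varphi, \rho, \lambda)$ is a gradient $\rho$-Einstein soliton if and only if conditions (1)--(4) of Theorem \ref{main-1} hold, and all that is left is to rewrite those four conditions with $s = 1$.

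Conditions (1) and (2) are immediate. Condition (1) of Theorem \ref{main-1}, ``the potential depends only on the base,'' is exactly ``$\varphi \in \mathcal C^\infty(F)$.'' Condition (2), ``the fiber scalar curvature is constant,'' is vacuous: the fiber $I$ is one-dimensional, so $\tau_I \equiv 0$. Next I would compute the total scalar curvature from the Proposition above with $s = 1$ and $\tau_I = 0$; the term carrying the fiber's scalar curvature and the $(s-1)$-term both vanish, leaving $\tau = \tau_F - 2\,\Delta_F(f)/f$. This identity is what lets me trade the ambient quantity $\rho\tau + \lambda$ for data intrinsic to $F$ in the remaining two conditions.

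Condition (3) of Theorem \ref{main-1} reads ${\rm Ric}_B + {\rm Hess}^\varphi_B = (\rho\tau+\lambda)g_B + \frac{s}{b}{\rm Hess}^b_B$; with $\frac{s}{b} = \frac{1}{f}$ it becomes ${\rm Ric}_F + {\rm Hess}^\varphi_F = (\rho\tau+\lambda)g_F + \frac{1}{f}{\rm Hess}^f_F$, and inserting $\rho\tau + \lambda = (\rho\tau_F+\lambda) - 2\rho\,\Delta_F(f)/f$ produces conclusion (2). For condition (4) I would use that ${\rm Ric}_I \equiv 0$ (the fiber is one-dimensional) while $g_I = -{\rm d}t^2 \not\equiv 0$, so the scalar coefficient multiplying the fiber metric in Theorem \ref{main-1}(4) must vanish; evaluating that coefficient with $b = f$ and $s = 1$ (so $b^\sharp = f\,\Delta_F(f)$ and the $(s-1)$-term drops), dividing by $f > 0$, and once more substituting the scalar-curvature identity yields conclusion (3). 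Running each of these elementary steps in reverse turns (1)--(3) of the statement back into (1)--(4) of Theorem \ref{main-1}, so the equivalence follows.

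Once the identification is fixed, the remaining work is routine; the one point that genuinely needs care --- and should be spelled out before Theorem \ref{main-1} is invoked --- is the interchange of the roles of ``base'' and ``fiber'': the factor $F$, which is the \emph{fiber} in the notation ${}_f I \times F$, plays the role of the \emph{base} in Theorem \ref{main-1}, and it is over $F$ that the potential function lives and that conditions (2)--(3) are equations. If one prefers to bypass Theorem \ref{main-1}, the same three conclusions follow by evaluating (\ref{meqn:gradient-Yamabe}) directly on the pairs $(X,Y)$, $(X,\partial_t)$, $(\partial_t,\partial_t)$ with $X, Y$ tangent to $F$, using the warped-product identities ${\rm Ric}(\partial_t,\partial_t) = f\,\Delta_F(f)$ and ${\rm Hess}^\varphi(\partial_t,\partial_t) = -f\,g_F(\nabla^F f,\nabla^F\varphi)$: the mixed equation forces $\varphi \in \mathcal C^\infty(F)$, the $(X,Y)$-block gives (2), and the $(\partial_t,\partial_t)$ component gives (3).
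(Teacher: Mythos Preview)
Your proposal is correct and follows exactly the route the paper indicates: the paper's entire argument is the sentence ``Again we apply Theorem~\ref{main-1},'' and you carry out precisely that specialization, correctly identifying $(F,g_F)$ as the base, $(I,-{\rm d}t^2)$ as the one-dimensional fiber, $b=f$, $s=1$, and using $\tau=\tau_F-2\Delta_F(f)/f$ to pass from $\rho\tau+\lambda$ to intrinsic $F$-data. Your closing remark on the direct evaluation of (\ref{meqn:gradient-Yamabe}) on $(X,Y)$, $(X,\partial_t)$, $(\partial_t,\partial_t)$ is a useful sanity check and in fact clarifies that the term $\varphi(f)$ in item~(3) arises from ${\rm Hess}^{\varphi}(\partial_t,\partial_t)=-f\,g_F(\nabla^F f,\nabla^F\varphi)$.
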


\begin{remark}
$\displaystyle{f {\rm Ric}_F - {\rm Hess}_F^f + f {\rm Hess}_F^{\varphi} =
[-\nabla^F(f) + \varphi(f)]g_F}$.
\end{remark}

\subsection{Gradient $\rho$-Einstein Soliton on 3-dimensional Walker Manifolds}

In general, a 3-dimensional manifold admitting a parallel degenerate
line field is said to be a Walker manifold, \cite{Walker1,Walker2}.
Suppose that $(M,g)$ is a 3-dimensional Walker Manifold then there
exist local coordinates $(t,x,y)$ such that the Lorentzian metric tensor
with respect to the local frame fields $\{ \partial_t, \partial_x, \partial_y \}$
takes the form given as:

\begin{equation}
\label{Walker metric}
g=2{\rm d}t{\rm d}y + {\rm d}x^2 + \phi(t,x,y) {\rm d}y^2,
\end{equation}
for some function $ \phi(t,x,y)$. The restricted case of Walker manifolds
where $\phi$ described as a function of only $x$ and $y$ is called as a
strictly Walker manifold and in particular strictly Walker manifolds are
geodesically complete. Also, it is known that a Walker manifold is Einstein
if and only if it is flat, \cite{Walker1}.

Now, we will investigate conditions on this particular class of manifolds
to have gradient Yamabe solitons, that is,
\begin{equation} \label{maineqn3WM} {\rm Ric} + {\rm Hess}(\varphi)_{ij}
= (\rho \tau+\lambda)g_{ij},
\end{equation}
where $\varphi$ is a potential function.

By using the metric $\eqref{Walker metric}$ and straightforward computations,
we have:
\begin{equation}
\label{system.3}
 \left\{ \begin{array}{ll}
{\rm Hess}(\varphi)_{tt} = \varphi_{tt}, \\
{\rm Hess}(\varphi)_{tx} = \varphi_{tx}, \\
{\rm Hess}(\varphi)_{ty} = \varphi_{ty} - \frac{1}{2} \phi_t \varphi_t, \\
{\rm Hess}(\varphi)_{xx} = \varphi_{xx}, \\
{\rm Hess}(\varphi)_{xy} = \varphi_{xy} - \frac{1}{2} \phi_x \varphi_t, \\
{\rm Hess}(\varphi)_{yy} = \varphi_{yy} - \frac{1}{2} (\phi \phi_t+\phi_y) \varphi_t +\frac{1}{2} \phi_x \varphi_x + \frac{1}{2} \phi_t
\varphi_y.
\end{array} \right.
\end{equation}

Moreover,

\begin{equation} \label{eqn3WM}
{\rm Ric}=\left[
              \begin{array}{ccc}
                0 & 0 & \frac{1}{2} \phi_{tt} \\
                0 & 0 & \frac{1}{2} \phi_{tx} \\
                \frac{1}{2} \phi_{tt} & \frac{1}{2} \phi_{tx} & \frac{1}{2} (\phi \phi_{tt} - \phi_{xx}) \\
              \end{array}
            \right]
\end{equation}

By combining these, we get:

By applying equations \eqref{maineqn3WM}, $\eqref{system.3}$ and $\eqref{eqn3WM}$
we obtain the following system of PDEs:

\begin{equation}
\label{system.4}
 \left\{ \begin{array}{ll}
\varphi_{tt}=0, \\
\varphi_{tx}=0,\\
\frac{1}{2} \phi_{tt}+\varphi_{ty}-\frac{1}{2}\phi_t \varphi_t=(\rho \tau+\lambda),\\
\varphi_{xx}=(\rho \tau+\lambda),\\
\frac{1}{2} \phi_{tx} + \varphi_{xy}-\frac{1}{2}\phi_x \varphi_t=0,\\
\frac{1}{2}(\phi \phi_{tt} - \phi_{xx})+ \varphi_{yy} - \frac{1}{2}(\phi \phi_t +\phi_y)\varphi_t +\frac{1}{2} \phi_x \varphi_x+ \frac{1}{2}
\phi_t \varphi_y=
(\rho \tau+\lambda) \phi
\end{array} \right.
\end{equation}

The first two equations of $\eqref{system.4}$ imply that
\begin{equation}
\label{wm_m1}
\varphi(t,x,y)=t B(y) + C(x,y) \quad \text{for some functions} \quad B,C.
\end{equation}

Combining $\eqref{system.4}$ and $\eqref{wm_m1},$ we obtain:

\begin{equation}
\label{system.4a}
 \left\{ \begin{array}{ll}
\frac{1}{2} \phi_{tt}+B^\prime(y)-\frac{1}{2} \phi_t B(y)=(\rho \tau+\lambda), \\
C_{xx}=(\rho \tau+\lambda),\\
\frac{1}{2} \phi_{tx} + C_{xy}(x,y)-\frac{1}{2} \phi_x B(y)=0,\\
\frac{1}{2}(\phi \phi_{tt} - \phi_{xx}) + t B^{\prime \prime}+C_{yy}(x,y)-
\frac{1}{2}(\phi \phi_t+\phi_y) B(y)\\+\frac{1}{2}(t B^\prime(y)+C_{y}(x,y))\phi_t+\frac{1}{2}C_x(x,y)\phi_x =(\rho \tau+\lambda) \phi
\end{array} \right.
\end{equation}

Differentiating the first and third equations of $\eqref{system.4a},$ with
respect to $x$ and $t,$ respectively, we have:

$$C_x(x,y)=xD(t,y)+H(y).$$
\begin{equation}
\label{wm_m1a}
C(x,y)=\frac{x^2}{2} D(y) + xE(y)+F(y) \quad \text{for some functions} \quad D,E,F.
\end{equation}
Thus,
\begin{equation}
\label{wm_m2a}
f(t,x,y)=tB(y)+\frac{x^2}{2} D(y) + xE(y)+F(y).
\end{equation}
Now, putting $\eqref{wm_m1a}$ into the system $\eqref{system.4a}$, to get a concerete solution we may impose the condition:
{``$\phi$ depends only on $x$"}. Then by the last equation of the system $\eqref{system.4a}$, we get $\phi_{xx}B(y)=0$. Thus, the following two cases are obtained:

\textbf{Case I:} $\phi_{xx}=0$, then from $\eqref{system.4a}$, the
potential function and the metric function are  found by
\begin{equation}
f(t,x,y)=\gamma t+(\alpha x +\beta)x+F(y), \qquad \phi=ax+b,
\end{equation}
 for some scalars $a,b,  \alpha, \beta, \gamma \in  \mathbb{R}$.

\textbf{Case II:} On the other hand, if $B(y)=0$, then  the
potential function and the metric function are found by
\begin{equation}
f(x,y)= mx +n\frac{y^2}{2}+py+r, \qquad  \phi =\frac{k}{m^2}e^{mx}+lx+s,
\end{equation}
for some scalars $k,l,m,n,p,r,s \in  \mathbb{R}$.

\begin{theorem} \label{thm:Walker}
Let $(M,g)$ be a 3-dimensional Lorenztian Walker manifold
equipped with metric:
$$g=2{\rm d}t{\rm d}y + {\rm d}x^2 + \phi(x) {\rm d}y^2.$$
Then $(M,g)$ is a gradient  $\rho$-Einstein soliton if and only if   the
potential function of the soliton and the metric function are  given by one of the following 2-cases:
\begin{enumerate}
\item
\begin{equation*}
f(t,x,y)=\gamma t+(\alpha x +\beta)x+F(y), \qquad \phi=ax+b,
\end{equation*}
 for some scalars $a,b,  \alpha, \beta, \gamma \in  \mathbb{R}$.
\item \begin{equation*}
f(x,y)= mx +n\frac{y^2}{2}+py+r, \qquad  \phi =\frac{k}{m^2}e^{mx}+lx+s,
\end{equation*}
for some scalars $k,l,m,n,p,r,s \in  \mathbb{R}$.
\end{enumerate}
\end{theorem}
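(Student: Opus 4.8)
The plan is to establish both implications by reducing the gradient $\rho$-Einstein soliton equation \eqref{maineqn3WM} to the PDE system already recorded in \eqref{system.3}--\eqref{system.4}, now read under the standing hypothesis $\phi=\phi(x)$, and then to integrate that system explicitly and, conversely, to verify that the resulting families solve it.

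First I would note that when $\phi$ depends on $x$ alone, all of $\phi_t,\phi_y,\phi_{tt},\phi_{tx}$ vanish; in particular a direct computation of the scalar curvature from \eqref{eqn3WM} yields $\tau=\phi_{tt}$, which vanishes identically here, so $\rho\tau+\lambda=\lambda$ and the soliton equation becomes ${\rm Ric}+{\rm Hess}(\varphi)=\lambda g$. Feeding the Hessian components \eqref{system.3} and the Ricci components \eqref{eqn3WM}, simplified by $\phi_t=\phi_y=0$, into this equation, the system \eqref{system.4} collapses to
\begin{gather*}
\varphi_{tt}=\varphi_{tx}=0,\qquad \varphi_{ty}=\lambda,\qquad \varphi_{xx}=\lambda,\\
\varphi_{xy}=\tfrac12\,\phi'(x)\,\varphi_t,\qquad -\tfrac12\,\phi''(x)+\varphi_{yy}+\tfrac12\,\phi'(x)\,\varphi_x=\lambda\,\phi .
\end{gather*}

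Next I would integrate the linear part. From $\varphi_{tt}=\varphi_{tx}=0$ one gets $\varphi(t,x,y)=t\,B(y)+C(x,y)$; then $\varphi_{ty}=\lambda$ makes $B$ affine in $y$ and $\varphi_{xx}=\lambda$ makes $C$ quadratic in $x$, say $C(x,y)=\tfrac{\lambda}{2}x^2+x\,E(y)+F(y)$. Substituting into the mixed equation $\varphi_{xy}=\tfrac12\phi'(x)\varphi_t$ gives $E'(y)=\tfrac12\phi'(x)B(y)$ on all of $M$; differentiating in $x$ produces the key relation $\phi''(x)\,B(y)=0$. Since $\phi''$ depends only on $x$ and $B$ only on $y$, this forces the dichotomy $\phi''\equiv0$ (Case I), or $B\equiv0$, i.e.\ $\varphi$ independent of $t$ (Case II), which is exactly the branching that yields the two families. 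In Case I one has $\phi=ax+b$, and the remaining two equations then constrain $E$ and $F$, so that reassembling $\varphi=t\,B(y)+C(x,y)$ gives the first family. In Case II, $\varphi=C(x,y)$ with $E'\equiv0$ so $E$ is constant, and (after renaming) $\varphi=mx+F(y)$; the $(y,y)$-equation separates into $F''(y)=\text{const}$, forcing $F$ to be a quadratic polynomial in $y$, together with the inhomogeneous constant-coefficient ODE $\phi''-m\phi'=\text{const}$ for the metric function, whose general solution is $\phi(x)=\tfrac{k}{m^2}e^{mx}+lx+s$. This reproduces the second family. For the converse, one substitutes each of the two families back into \eqref{maineqn3WM} and checks that every component of the system above holds, with $\rho$ arbitrary (since $\tau\equiv0$) and the appropriate value of $\lambda$.

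I expect the main obstacle to be the $(y,y)$-component, the only equation in which $\phi$ and the full potential $\varphi$ are genuinely coupled and in which $\phi''$ appears: the separation-of-variables step there has to be done with care, and in Case II it reduces precisely to recognizing and solving $\phi''-m\phi'=\text{const}$, which is what makes the exponential term $e^{mx}$ emerge. The only other fiddly point is the bookkeeping of the integration constants and their matching to the scalars $a,b,\alpha,\beta,\gamma$ in Case I and $k,l,m,n,p,r,s$ in Case II.
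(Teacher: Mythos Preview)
Your proposal is correct and follows essentially the same route as the paper: specialize the PDE system \eqref{system.4} to $\phi=\phi(x)$ (so $\tau=\phi_{tt}=0$), integrate $\varphi_{tt}=\varphi_{tx}=0$ to get $\varphi=tB(y)+C(x,y)$ with $C$ quadratic in $x$, and obtain the dichotomy $\phi''(x)B(y)=0$ that splits into the two cases. The only cosmetic differences are that the paper first writes the system for general $\phi(t,x,y)$ and only then imposes $\phi=\phi(x)$, and that it attributes the relation $\phi_{xx}B(y)=0$ to the last equation of \eqref{system.4a} rather than to the $x$-derivative of the mixed $(x,y)$-equation as you do; your derivation of that step is in fact the cleaner one.
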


Moreover, a pseudo-Riemannian manifold is said to be conformally symmetric if its Weyl tensor is parallel, i.e., $\nabla W = 0$,
\cite{derdzinski2007, derdzinski2009}.
It is known that any conformally symmetric Riemannian manifold
is either locally symmetric (i.e., $\nabla R = 0$) or locally conformally flat (i.e., $W = 0$).
In the nontrivial case ($\nabla W =0$  and $\nabla R \neq 0$, $W\neq 0$ ), the manifold $(M, g) $ is said to be essentially conformally
symmetric.

\begin{theorem} \cite{calvino2014}
A three-dimensional pseudo-Riemannian manifold is essentially conformally
symmetric if and only if it is a strict Lorentzian Walker manifold, locally isometric to
$(\mathbb{R}^3, (t, x, y), g_a)$, where
\begin{equation*}
    g_a=2{\rm d}t{\rm d}y+{\rm d}x^2+(x^3+a(y)x){\rm d}y^2,
\end{equation*}
for an arbitrary smooth function $a(y)$.
\end{theorem}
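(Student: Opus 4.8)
Since this statement is quoted from \cite{calvino2014}, in the present paper one need only invoke it; a self-contained argument would be organized by treating the two implications separately. For the ``if'' direction I would compute directly with the metric $g_a = 2\,{\rm d}t\,{\rm d}y + {\rm d}x^2 + (x^3 + a(y)x)\,{\rm d}y^2$. This is a Walker metric of the form \eqref{Walker metric} with $\phi(t,x,y) = x^3 + a(y)x$, independent of $t$, so that $\phi_t \equiv 0$, $\phi_{tt} \equiv \phi_{tx} \equiv 0$ and $\phi_{xx} = 6x$. By \eqref{eqn3WM} one obtains ${\rm Ric} = -3x\,{\rm d}y \otimes {\rm d}y$, hence scalar curvature $\tau \equiv 0$; since ${\rm Ric}$ depends genuinely on $x$, one has $\nabla R \neq 0$, and a short computation of the Cotton tensor $\mathcal{C}$ shows $\mathcal{C} \neq 0$ (equivalently, $g_a$ is not locally conformally flat, its warping term being a genuine cubic in $x$). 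It then remains only to verify the single tensorial identity $\nabla \mathcal{C} = 0$ in the coordinate frame $\{\partial_t, \partial_x, \partial_y\}$; this is mechanical, and every term involving $a(y)$ cancels in the course of it, which is precisely why an arbitrary smooth function $a(y)$ is permitted. Thus each $g_a$ is essentially conformally symmetric and, being strictly Walker, geodesically complete.

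For the ``only if'' direction I would proceed in two stages. \emph{Stage one (algebra).} Given an essentially conformally symmetric $(M^3,g)$, I would determine the pointwise algebraic (Segre) type of the Ricci operator in Lorentzian signature. Since the Weyl tensor vanishes identically in dimension three, all conformal information is carried by $\mathcal{C}$, and the conditions $\nabla \mathcal{C} = 0$, $\mathcal{C} \neq 0$, $\nabla R \neq 0$ are incompatible with a diagonalizable Ricci operator; they force a degenerate type possessing a null eigendirection, and then $\nabla \mathcal{C} = 0$ forces that null line field to be parallel. Hence $(M,g)$ is a Lorentzian Walker $3$-manifold and one may pass to Walker coordinates, $g = 2\,{\rm d}t\,{\rm d}y + {\rm d}x^2 + \phi(t,x,y)\,{\rm d}y^2$. \emph{Stage two (analysis).} I would then write $\nabla \mathcal{C} = 0$ out as a system of PDEs for $\phi$, using the Christoffel symbols of the Walker metric and the Ricci components \eqref{eqn3WM}, integrate it, and use the residual coordinate freedom that preserves the Walker form --- translations and reparametrizations in $t$ and $y$, and affine changes in $x$ --- to normalize away the lower-order terms. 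One is then driven to $\phi$ being, up to such a change, the depressed cubic $x^3 + a(y)x$. The degenerate branches are discarded at the end: $\phi_{xx} \equiv 0$ yields a locally conformally flat metric and $\phi$ of degree at most two yields a locally symmetric one, so only the asserted family survives.

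I expect the genuine obstacle to be the ``only if'' direction, and within it Stage two: one must show that $\nabla \mathcal{C} = 0$ admits \emph{no} Walker solution outside the cubic family, which requires tracking carefully how the coordinate changes preserving the Walker form act on $\phi$ and excluding spurious integration branches; Stage one is comparatively clean but still needs a complete case analysis over the admissible Segre types of a Lorentzian Ricci operator. Once this classification is available, the paper's closing assertion follows at once: substituting $g_a$ into the gradient $\rho$-Einstein soliton equation \eqref{maineqn3WM} and running the PDE system \eqref{system.4} with $\phi = x^3 + a(y)x$ and $\tau \equiv 0$, the potential $\varphi$ is forced to be affine in $x$ with a constant coefficient $E_0$, and the last equation of \eqref{system.4} then reduces to $\frac{3}{2}E_0\,x^2 - 3x + F''(y) + \frac{1}{2}E_0\,a(y) = 0$; the coefficient of $x^2$ forces $E_0 = 0$ and then the coefficient of $x$ reads $-3 \neq 0$, a contradiction. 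Hence no $3$-dimensional essentially conformally symmetric manifold carries a gradient $\rho$-Einstein soliton.
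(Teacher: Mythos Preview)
The paper does not prove this statement at all: it is quoted from \cite{calvino2014} and used as a black box to feed the subsequent non-existence theorem. You recognize this correctly in your first sentence, so on the narrow question of comparison there is nothing further to say --- your proposal and the paper agree that the result is to be invoked, not re-proved.

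The sketch you then volunteer is a sensible outline of how the argument in \cite{calvino2014} is organized: the ``if'' direction by direct computation of ${\rm Ric}$, $\tau$ and the Cotton tensor for $g_a$, and the ``only if'' direction by first extracting the parallel null line (hence Walker structure) from the algebraic type of the Ricci operator, then integrating the conformal-symmetry PDEs for $\phi$ and normalizing via the residual Walker coordinate freedom. That is indeed where the work lies, and your caveat that Stage~two is the delicate part is accurate. One small point: in dimension three the definition of ``essentially conformally symmetric'' used in \cite{calvino2014} is not literally $\nabla W=0$ with $W\neq 0$ (which would be vacuous), but the Cotton-tensor analogue you implicitly adopt; you might state this explicitly rather than leave it to the reader.

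Your final paragraph is really a proof of the \emph{next} theorem in the paper (the non-existence of $3$-dimensional essentially conformally symmetric gradient $\rho$-Einstein solitons). There your route is the same as the paper's --- substitute $\phi=x^3+a(y)x$ into system~\eqref{system.4}, reduce $\varphi$ to $tB(y)+\tfrac{\lambda}{2}x^2+E_0x+F(y)$, and read off a polynomial-in-$x$ contradiction --- and your endgame is in fact cleaner: you isolate the $x^2$-coefficient to force $E_0=0$ and then the $x$-coefficient to obtain $-3=0$, whereas the paper stops at ``$\lambda$ vanishes, which is a contradiction'' without making the final obstruction explicit.
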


Therefore, by applying the above theorem for gradient  $\rho$-Einstein solitons, we terminate our study by the following result:

\begin{theorem}
There exists no 3-dimensional esentially conformally symmetric gradient  $\rho$-Einstein soliton.
\end{theorem}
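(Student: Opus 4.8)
The plan is to combine the structure theorem of \cite{calvino2014} quoted above with the Walker soliton analysis of this section. Being a gradient $\rho$-Einstein soliton is a purely local condition, so it suffices to show that the model metric
$$g_a = 2\,{\rm d}t\,{\rm d}y + {\rm d}x^2 + \bigl(x^3 + a(y)x\bigr){\rm d}y^2 \quad\text{on}\quad \mathbb{R}^3$$
carries no potential function $\varphi$ and constants $\rho,\lambda$ satisfying ${\rm Ric} + {\rm Hess}(\varphi) = (\rho\tau + \lambda)g_a$, whatever the smooth function $a$ is.

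Here $\phi(t,x,y) = x^3 + a(y)x$ does not depend on $t$, so $\phi_t = \phi_{tt} = \phi_{tx} = 0$; contracting \eqref{eqn3WM} with the inverse metric then gives $\tau = \phi_{tt} = 0$, hence $\rho\tau + \lambda = \lambda$. Feeding this into the system \eqref{system.4} — one cannot quote Theorem \ref{thm:Walker} here, since there $\phi$ was a function of $x$ alone, whereas now it genuinely involves $y$ — reduces it to
\begin{align*}
&\varphi_{tt} = 0,\quad \varphi_{tx} = 0,\quad \varphi_{ty} = \lambda,\quad \varphi_{xx} = \lambda,\quad \varphi_{xy} = \tfrac12\phi_x\varphi_t,\\
&-\tfrac12\phi_{xx} + \varphi_{yy} - \tfrac12\phi_y\varphi_t + \tfrac12\phi_x\varphi_x = \lambda\phi .
\end{align*}

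Next I would integrate the first five equations. From $\varphi_{tt} = \varphi_{tx} = 0$ and $\varphi_{ty} = \lambda$ one obtains $\varphi_t = \lambda y + c$ for some constant $c$, while $\varphi_{xx} = \lambda$ together with $\varphi_{tx} = 0$ gives $\varphi_x = \lambda x + p(y)$, so that $\varphi_{xy} = p'(y)$ is independent of $x$. The fifth equation, however, reads $\varphi_{xy} = \tfrac12\bigl(3x^2 + a(y)\bigr)(\lambda y + c)$, whose $x^2$-term can vanish identically only if $\lambda y + c \equiv 0$, i.e. $\lambda = 0$ and $c = 0$. Consequently $\varphi_t \equiv 0$, $\varphi_{xx} = 0$, and then $\varphi_{xy} = p'(y) = 0$, so that $\varphi = p_0 x + q(y)$ with $p_0$ constant and $q$ a function of $y$.

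Finally, substituting this $\varphi$ and $\lambda = 0$ into the sixth equation turns it into
$$-3x + q''(y) + \tfrac{3p_0}{2}x^2 + \tfrac{p_0}{2}a(y) = 0 ;$$
the coefficient of $x^2$ forces $p_0 = 0$, and then the coefficient of $x$ gives the contradiction $-3 = 0$. Hence no such soliton can exist. The argument is almost entirely mechanical; the only subtlety is the observation that one must restart from the full Walker system \eqref{system.4} rather than from Theorem \ref{thm:Walker}, and that the contradiction is ultimately driven by the cubic term $x^3$ in $g_a$ (equivalently, by $\phi_{xx} = 6x \not\equiv 0$), which has no counterpart among the $\rho$-Einstein Walker metrics classified in Theorem \ref{thm:Walker}.
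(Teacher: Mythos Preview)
Your proof is correct and follows essentially the same route as the paper: specialize the Walker soliton system \eqref{system.4} to the essentially conformally symmetric model $\phi=x^3+a(y)x$, integrate the linear equations to pin down the form of $\varphi$, and then extract a contradiction by comparing coefficients of powers of $x$. Your presentation is in fact somewhat cleaner---you go directly from $\varphi_{xy}=\tfrac12(3x^2+a(y))(\lambda y+c)$ to $\lambda=c=0$, whereas the paper arrives at the analogous conclusion $B(y)=0$ via an extra differentiation step---but the underlying argument is the same.
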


\begin{proof}
By using system (\ref{system.4}), we have:
\begin{equation}
\label{system.8}
 \left\{ \begin{array}{ll}
\phi_{tt}=0, \\
\phi_{tx}=0, \\
\phi_{ty}=\lambda =\phi_{xx}, \\
\phi_{xy}-\frac{1}{2}[3x^2+a(y)] \phi_t = 0, \\
\phi_{yy}-\frac{1}{2}a^\prime(y) x \phi_t+\frac{1}{2}[3x^2+a(y)]\phi_x-\frac{x}{3}=\lambda[x^3+a(y)x]
\end{array} \right.
\end{equation}

First, $\phi_{tx}=0$ implies that $\phi_t=A(t,y).$ Also, $\phi_{tt}=A_t(t,y)=0.$ So, $A(t,y)=B(y).$
Then $\phi_t=B(y).$ Thus, $\phi=t B(y) + C(x,y)$ and $\lambda=B^\prime(y).$

Also, $\phi_{xx}=\lambda=B^\prime(y).$ So, $C_{xx}(x,y)=B^\prime(y).$ Then $C_x(x,y)=x B^\prime(y)+D(y).$
Hence, $C(x,y)=\frac{x^2}{2} B^\prime(y)+x D(y)+E(y)$. Thus,  we get \\
$\phi=tB(y)+\frac{x^2}{2}B^\prime(y)+xD(y)+E(y)$.

$\bullet$ $\phi_{xy}=\frac{1}{2}[3x^2+a(y)] \phi_t$ implies that
\begin{equation} \label{thm8_8}
xB^{\prime \prime}(y)+D^\prime(y)=\frac{1}{2}[3x^2+a(y)]B(y).
\end{equation}

$\bullet$ $\phi_{yy}-\frac{1}{2}a^\prime(y) x \phi_t+\frac{1}{2}
[3x^2+a(y)]\phi_x-\frac{x}{3}=\lambda[x^3+a(y)x]$ implies that
\begin{equation}
\begin{split} t B^{\prime \prime}(y) + \frac{x^2}{2}B^{\prime \prime \prime}(y)
+ x D^{\prime \prime}(y) + E^{\prime \prime}(y) \\
& = \frac{x}{2}a^\prime(y)B(y) \\
& - \frac{1}{2}[3x^2+a(y)][xB^\prime(y)+D(y)] \\
& + \frac{x}{3} + [x^3+ x a(y)]B^\prime(y).
\end{split}
\end{equation}
Differentiating the last two equations of System (\ref{system.8}) with
respect to $x$ and then combining the resulting equations, we obtain:
\begin{equation} \label{thm8_x}
\frac{3x^2}{2}B^\prime(y)+3xD(y)-\frac{1}{3}=\frac{a(y)}{2}B^\prime(y).
\end{equation}
By differentiating equation \eqref{thm8_x} with respect to $x,$ we have
$D^\prime(y)=-xB^{\prime \prime}(y).$
The last equation and the equation (\ref{thm8_8}) imply that \\
\begin{equation} \label{thm8_last}
[3x^2+a(y)]B(y)=0.
\end{equation}
Thus we have two cases:
\begin{itemize}
\item  $3x^2+a(y)=0$ cannot happen since $a$ is a  function of $y$.
\item If $B(y)=0,$ then in this case, we get $D(y)=0$ and thus $\lambda$ vanishes, which is a contradiction.
\end{itemize}
Therefore, the proof is completed.
\end{proof}


\begin{thebibliography}{99}

\bibitem{AD1} D.E. Allison, \emph{Energy conditions in standard static
space-times}, General Relativity and Gravitation, \textbf{20} (1988) 115-122.

\bibitem{AD} D.E. Allison, \emph{Geodesic Completeness in Static Space-times},
Geometriae Dedicata \textbf{26} (1988) 85-97.

\bibitem{GES} D.E. Allison and B. \"{U}nal, \emph{Geodesic Structure of
Standard Static Space-times}, Journal of Geometry and Physics \textbf{46}
(2003) 193-200.




\bibitem{Besse2008} A. L. Besse, \emph{Einstein Manifolds}, Classics in
Mathematics, Springer-Verlag, Berlin, (2008).


\bibitem{Bishop:1969} R. L. Bishop and B. O'Neill, \emph{Manifolds of
negative curvature}, Trans. Amer. Math. Soc. \textbf{145}, (1969), 1-49.

\bibitem{Blaga2021} A. M. Blaga and H. M. Ta\c{s}tan,
\emph{Some results on almost $\eta$-Ricci-Bourguignon solitons}, J. Geom. Phys., \textbf{168}, (2021),
Paper No. 104316, 9 pp.

\bibitem{Blaga2022} A. M. Blaga and H. M. Ta\c{s}tan,
\emph{Gradient solitons on doubly warped product manifolds}, Reports on Mathematical Physics,
\textbf{89}(3), (2022), 319--333.

\bibitem{bourguignon} J.-P.Bourguignon, \emph{Ricci curvature and Einstein metrics},
Global differential geometry and global analysis (Berlin, 1979), Lecture Notes in Math.,
vol. 838, Springer, Berlin, 1981, pp. 42--63.


\bibitem{Brendle2} S. Brendle, \emph{Convergence of the Yamabe flow in dimension 6 and higher},
Invent. Math., \textbf{170} (3) (2007) 541--576.

\bibitem{calvino2014} E. Calvino-Louzao, E. Garcia-Rio, J. Seoane-Bascoy, R. Vazquez-Lorenzo,
\emph{Three-dimensional conformally symmetric manifolds}, Annali di Matematica, \textbf{193}, (2014), 1661--1670.






\bibitem{Catino2017} G. Catino, Giovanni, L. Cremaschi, Z. Djadli, C. Mantegazza, and L. Mazzieri, \emph{The Ricci-Bourguignon flow}, Pac. J. Math. \textbf{287}(2), (2017), 337--370.

\bibitem{Catino2016} G. Catino and L. Mazzieri, \emph{Gradient Einstein solitons},
Nonlinear Anal., \textbf{132}, (2016), 66--94.

\bibitem{Catino2015} G. Catino, L. Mazzieri and S. Mongodi,
\emph{Rigidity of gradient Einstein shrinkers}, Commun. Contemp. Math. \textbf{17}(6), (2015), 1550046, 18 pp.

\bibitem{Walker2} M. Chaichi, E. Garcia-Rio and M. E. Vazquez-Abal, \emph{Three-dimensional
Lorentz manifolds admitting a parallel null vector field}, J. Phys. A: Math. Gen., \textbf{38}, (2005), 841--850.



\bibitem{Chen}  B.-Y. Chen, \emph{A simple characterization of generalized Robertson-Walker space-times}, Gen.
Relativ. Gravit. \textbf{46} (2014) 1833.

\bibitem{chen2017} B.-Y. Chen, \emph{Differential geometry of warped product manifolds and submanifolds}, World Scientific,
(2017).

\bibitem{cho2019} J. T. Cho and M. Kimura, \textit{Ricci solitons and real hypersurfaces in a complex space
form}, Tohoku Math. J. \textbf{61}(2) (2009), 205--212.


\bibitem{derdzinski2007} A. Derdzinski and W. Roter, \emph{Projectively flat surfaces, null parallel distributions, and conformally symmetric
manifolds}, Tohoku Math. J. \textbf{59}, (2007), 565--602

\bibitem{derdzinski2009} A. Derdzinski and W. Roter, \textit{The local structure of conformally symmetric manifolds,} Bull. Belg. Math.
Soc. Simon Stevin \textbf{16}, (2009), 117--128.

\bibitem{Dwidedi} S. Dwivedi,
\emph{Some results on Ricci-Bourguignon solitons and almost solitons},
Can. Math. Bull. \textbf{64}(3), (2021), 591--604.

\bibitem{ehrlich} P. E. Ehrlich, \emph{Metric deformations of Ricci and sectional curvature on compact Riemannian manifolds},
Ph.D.dissertation, SUNY, Stony Brook, N.Y., (1974)

\bibitem{gebarowski1} A. Gebarowski, \emph{Doubly warped products with harmonic Weyl conformal curvature tensor},
Colloq. Math. \textbf{67},(1995), 73--89.

\bibitem{gebarowski2} A. Gebarowski, \emph{On conformally recurrent doubly warped products}, Tensor (N.S.), \textbf{57}, (1996), 192-196.

\bibitem{fathi} Z. Fathi, S. Lakzian,  \emph{Bakry-Emery Ricci Curvature Bounds for Doubly Warped Products of Weighted Spaces},
J. Geom. Anal. \textbf{32}, 79 (2022). https://doi.org/10.1007/s12220-021-00745-7

\bibitem{G_U2022} S. G\"uler and B. \"Unal, \emph{The Existence of Gradient Yamabe Solitons
on Spacetimes}, Results Math, \textbf{77}(5), (2022), 206--224.

\bibitem{gupta18} P. Gupta, \emph{On compact Einstein doubly warped product manifolds}, Tamkang Jour. Math.
49 (2018), no.1, 267-275.

\bibitem{Hamilton88} R.S. Hamilton,\emph{The Ricci flow on surfaces,
in: Mathematics and General Relativity},  Contemp. Math., \textbf{71}, (1988), 237--262.

\bibitem{Huang2017} G. Huang, \emph{Integral pinched gradient shrinking gradient $\rho$-Einstein solitons},
J. Math. Anal. Appl., \textbf{451}(2), (2017), 1045--1055.


\bibitem{Ho2020} P. T. Ho, \emph{On the Ricci-Bourguignon flow,}
Internat. J. Math. \textbf{31}(6), (2020), 2050044, 40 pp.

\bibitem{Lemes2017} M. Lemes de Sousa and R. Pina,
\emph{Gradient Ricci solitons with structure of warped product},
Results Math. \textbf{71}(3-4), (2017), 825--840.


\bibitem{ManticaDe1} C. A. Mantica, L. G. Molinari,  U. C. De, \emph{A condition for a perfect fluid space-time to be
a generalized Robertson-Walker space-time}, J. Math. Phys. \textbf{57 }(2) (2016)  022508.

\bibitem{ManticaDe2} C. A. Mantica , Y. J. Suh,  U. C. De, \emph{A note on generalized Robertson-Walker space-times},
Int. J. Geom. Meth. Mod. Phys. \textbf{13} (2016) 1650079.

\bibitem{Mondal} C. K. Mondal, Chandan Kumar and A. A. Shaikh,
\emph{Some results in $\eta$-Ricci soliton and gradient  $\rho$-Einstein soliton in a complete
Riemannian manifold}, Commun. Korean Math. Soc. \textbf{34}(4), (2019), 1279--1287.



\bibitem{Oneill:1983} B. O'Neill, \emph{Semi-Riemannian Geometry with
Applications to Relativity}, Academic Press Limited, London, 1983.

\bibitem{Pina2020} R. Pina and I. Menezes,
\emph{On gradient Schouten solitons conformal to a pseudo-Euclidean space},
Manuscripta Math., \textbf{163}(3-4), (2020), 395--406.

\bibitem{Sanchez98} M. S\'{a}nchez, \emph{\ On the Geometry of Generalized
Robertson-Walker Spacetimes: geodesics}, Gen. Relativ. Gravitation \textbf{30} (1998) 915-932.

\bibitem{Sanchez99} M. S\'{a}nchez, \emph{On the Geometry of Generalized
Robertson-Walker Spacetimes: Curvature and Killing fields}, J. Geom. Phys.
\textbf{31} (1999) 1-15.

\bibitem{Shaikh2021} A. A. Shaikh,  A. W. Cunha, P. Mandal,
\emph{Some characterizations of gradient $\rho$-Einstein solitons},
J. Geom. Phys. \textbf{166}, (2021), Paper No. 104270, 7 pp.

\bibitem{Shaikh2022} A. A. Shaikh, P. Mandal and K. Chandan,
\emph{Diameter estimation of gradient  $\rho$-Einstein solitons},
J. Geom. Phys. \textbf{177} (2022), Paper No. 104518, 7 pp.

\bibitem{Shenawy:2015} S. Shenawy and B. \"{U}nal, $2-$\emph{Killing vector
fields on warped product manifolds}, International Journal of Mathematics,
\textbf{26} (2015) 1550065


\bibitem{Turki2022} N. B. Turki, S. Shenawy, H. K. EL-Sayied, N. Syied, C. A. Mantica,
\emph{gradient $\rho$-Einstein Solitons on Warped Product Manifolds and Applications},
Journal of Mathematics, \textbf{2022}, Article ID 1028339, 10 pages, 2022.

\bibitem{Walker1} A. G. Walker, \emph{Canonical form for a Riemannian space
with a parallel field of null planes}, Quart. J. Math.  Oxford, \textbf{2}1, (1950) 69--79.

\bibitem{unal} B. \"{U}nal, \emph{Doubly warped products}, Diff. Geom. Appl., \textbf{15}(3), (2001), 253--263.

\bibitem{ye} R. Ye, \emph{Global existence and convergence of Yamabe flow}, J. Diff. Geom., \textbf{39}(1), (1994), 35--50.



\end{thebibliography}
\end{document}